\documentclass[leqno,12pt]{article} 
\usepackage[dvipdfmx,colorlinks=true]{hyperref}
\hypersetup{urlcolor=blue, citecolor=red}
\usepackage{amsmath, amssymb, bm}
\usepackage{amsthm} 
\usepackage{amsmath,amssymb,mathtools}
\usepackage{mathrsfs} 
\usepackage{ascmac}
\usepackage{graphicx,color}
\theoremstyle{definition} 
\newtheorem{Th}{\bf Theorem}[section] 
\newtheorem{Le}{\bf Lemma}

\newtheorem{Rem}{\bf Remark}

\newcommand{\N}{\mathbb{N} }

\newcommand{\R}{\mathbb{R} }

\newcommand{\LC}{\left ( }
\newcommand{\RC}{\right ) }
\newcommand{\LD}{\left \{ }
\newcommand{\RD}{\right \} }
\newcommand{\LZ}{\left | }
\newcommand{\RZ}{\right | }

\newcommand{\LN}{\left \|  }
\newcommand{\RN}{\right \| }

\DeclareMathOperator{\diag}{diag}
\DeclareMathOperator{\id}{id}

\DeclareMathOperator*{\argmax}{argmax}

\makeatletter
\def\address#1#2{\begingroup
\noindent\parbox[t]{7.8cm}{%
\small{\scshape\ignorespaces#1}\par\vskip1ex
\noindent\small{\itshape E-mail address}%
\/: #2\par\vskip4ex}\hfill%
\endgroup}%
\makeatother
\title{Cahn--Hilliard equation associated with hypergraph Laplacian} %
\author{Takeshi Fukao, Masahiro Ikeda, Shun Uchida}
\date{} %
\begin{document}

\maketitle


\begin{abstract}
A hypergraph Laplacian was introduced to investigate the structure of networks written as hypergraphs. 
It was shown that the behavior of solutions to an  evolution equation associated with the hypergraph Laplacian 
quite resembles that of solutions to the classical heat equation. 
Hence by replacing the Laplacian in PDEs with the hypergraph Laplacian,
we might be able to introduce various diffusion models on hypergraphs (discrete domains), 
whose behavior of solution is similar to PDEs. 
In this paper, we consider a system of  equations obtained by replacing the Laplacian 
in the original Cahn--Hilliard equation with the hypergraph Laplacian. 
Due to the nonlinearity and multivaluedness of the hypergraph Laplacian,
it is difficult to apply methods for the original Cahn--Hilliard equation 
to our problem.  
To cope with these difficulty, 
we shall introduce a new proof by using properties of the hypergraph Laplacian in this paper. 

\end{abstract}


\section{Introduction}

Multiple computers connected in a network, 
or multiple drones controlled while communicating with one another, 
can be described as graphs or hypergraphs. 
Describing the flow of physical quantities on such graphs is expected to lead to a deeper understanding of these networks. 
As an illustrative example, 
let us abstract everything and consider the diffusion of a virtual quantity analogous to heat. 
This can be modeled as a heat equation on a graph or hypergraph: 
the virtual heat diffuses over the graph or hypergraph as time progresses. 
One expects that diffusion proceeds more readily across strongly connected portions 
and more slowly across weakly connected ones.
Let us now consider more general parabolic equations. 
For instance, the Allen--Cahn equation and the Cahn--Hilliard equation, 
which describe phase transition and phase separation phenomena, 
are known to evolve in the direction of decreasing a double-well potential. 
Just as the heat equation on a graph or hypergraph involved a virtual physical quantity, 
the order parameter appearing in these two equations may likewise be regarded as a virtual quantity in the present context. 
Nevertheless, the Allen--Cahn and Cahn--Hilliard equations are recognized not merely as models of phase transition and phase separation but, more broadly, as mathematical models describing a wide variety of phenomena --- in particular, as continuous approximation models for phenomena involving free boundaries. 
Consequently, the study of these equations on graphs and hypergraphs is expected to yield diverse applications in the future. 
Here, we focus our attention on the Cahn--Hilliard equation.

The (viscosity) Cahn--Hilliard equation is the following system of partial differential equations: 
\begin{equation}
\begin{cases}
~~ \partial _t u - \Delta \mu = 0,  \\ 
~~ \mu = \tau \partial _t u - \Delta u + W ' (u) . 
\end{cases}
\label{C-H}
\end{equation}
This system was introduced by Cahn--Hilliard \cite{CH}
to describe two-phase separation process of a binary alloy.
Here $u $ is the order parameter, $\mu $ is the chemical potential, 
and $\tau $ is a viscosity parameter.
Moreover, $ W ' (u )$ is the derivative of a double-well potential $W(u)$.
A typical example of potentials is   
$W  (u) = \frac{1}{4} u ^4  - \frac{1}{2} u ^2  $ and $W ' (u) = u ^3 -  u  $. 
In this equation, we can observe the behavior of 
the order parameter $u$ which 
is separated into the pure states $u = 1$ and $u = -1$ (the minimizer of the double-well potential).
Recently, 
the cases of more general potentials $W$ have been investigated in many papers
(see, e.g., \cite{CMZ,  CF01, CF02, CGS, GMS, M} and references therein).
Here, the unknown functions $u$ and $\mu$ may be interpreted as scalar-valued functions on a given domain. 
However, in the present paper we extend these to vector-valued functions, and the reader should bear this in mind.

On the other hand, a hypergraph Laplacian $\mathcal{L} _{G,p} $ is introduced in Yoshida \cite{Yoshida00}
which comprises
a subdifferential of the Lovasz extension of submodular transformations
and written as a nonlinear set-valued operator on an Euclidean space $\R ^N $
(precise definitions of hypergraphs and the hypergraph Laplacian will be stated in  Section 2).
The hypergraph Laplacian is used to investigate the structure of networks written as hypergraphs,
for example, hypergraph clustering based on Cheeger's inequality (see \cite{IMTY, Yoshida00}) 
and PageRank (see \cite{TMIY})
and Ricci curvature of hypergraphs (see \cite{Aka, IKTU}).
In \cite{IU}, we consider an evolution equation 
associated with the hypergraph Laplacian $u' + \mathcal{L}_{G ,p } (u )  \ni 0 $
and we show that the behavior of solutions to $u' + \mathcal{L}_{G ,p }  (u )  \ni 0 $
quite resembles that of the solution to the classical heat equation with $p$-Laplacian 
$\partial _t u - \Delta _p u = 0$. 
Hence by replacing the Laplacian $\Delta $ in parabolic type PDEs with the hypergraph Laplacian $\mathcal{L}_{G,p }$,
we might be able to introduce various diffusion models on hypergraphs (discrete domains), 
whose behavior of solution is similar to parabolic type PDEs.

In this paper, we consider a system of  equations obtained by replacing the Laplacian 
$\Delta $ in the  Cahn--Hilliard equation with the hypergraph Laplacian $ \mathcal{L} _{G ,p }$.
\begin{equation}
\begin{cases}
~~ u' (t) + \mathcal{L} _{G,p} (\mu (t) ) \ni 0 
			~~&~~t \in (0,T) , \\
~~\mu (t) \in \tau u'(t) + \mathcal{L} _{G,p} (u(t)) + a \| u(t) \| ^{q-2} u(t)
			+ \pi (u(t)) - f(t) ~~&~~ t \in (0,T) , \\
~~u(0) = u_0 ,  
\end{cases}
\label{Eq}
\tag{P}
\end{equation}
where $ \tau \in [0 , \infty ) $, $a > 0 $, $q > 2 $ are given parameters 
and 
$\mathcal{L}  _{G, p } :\R ^N \to 2 ^ { \R ^N}  $ is the hypergraph Laplacian. 
Moreover, 
$ \pi : \R ^N \to \R ^N $ is a Lipschitz continuous function with 
a primitive function $k : \R ^N \to \R $ and 
$\| \cdot  \|$ is the canonical norm of $\R ^N $. 
Since the hypergraph Laplacian $\mathcal{L}  _{G, p } :\R ^N \to 2 ^ { \R ^N}  $ 
is a (nonlinear set-valued) operator on a Euclidean space $\R ^N $, 
\eqref{Eq} is a nonlinear set-valued system of ODEs 
and unknown functions are $u , \mu  : [0,T] \to \R ^N $
and a given function is $f : [0,T] \to \R ^N $,
where $T > 0 $ is a given length of the time interval.
This system possesses a double-well potential $W (u) = \frac{a}{q} \| u \| ^q - k (u)$. 
This approach follows the well-established framework of  multi-component Allen--Cahn and Cahn--Hilliard equations (see, e.g., \cite{RSK} and \cite{EL, E}).

 By analogy with the results of the Cahn--Hilliard equation \eqref{C-H}, this system \eqref{Eq} can be regarded as a preliminary model for the two-phase separation process on hypergraphs (discrete domains). Here, the double-well potential is simply adopted in the form used for the multi-component case, without adaptation to the hypergraph setting; accordingly, the equation should be understood as a first step toward describing such clustering behavior, rather than a complete description of the phenomenon. Naturally, the potential need not be restricted to this particular form and may instead be defined more generally as a nonlinear term, depending on the application. In this paper, however, we adopt the potential described above. 
Moreover, we expect that the order parameter varies smoothly across the hypergraph, in the sense that vertices close to a fixed vertex tend to take similar values, while vertices far from it tend to take increasingly different values. This behavior, if confirmed, would provide a further indication that the equation qualitatively reflects the clustering tendency of hypergraph networks, even though the potential considered here does not yet enforce a sharp two-phase separation at each vertex.

One of the methods for solving the Cahn--Hilliard equation
is to rewrite \eqref{C-H} into a single equation 
\begin{equation*}
\partial _t u - \tau \partial _t \Delta u + \Delta ^2 u = \Delta W ' (u )  
\end{equation*}
and use techniques for the parabolic equation with bilaplacian operator $\Delta ^2 $ (see, e.g., \cite{M}). 
However, this method does not seem to be valid for \eqref{Eq}
since the hypergraph Laplacian $\mathcal{L} _{G ,p } $ is not a linear operator 
and the square of a nonlinear operator $A^2 $ is not maximal monotone in general 
even if $A $ is maximal monotone.
Another approach is to consider 
\begin{equation*}
(-\Delta ) ^{-1 } \partial _t u  + \tau \partial _t  u -  \Delta  u = - W ' (u )  
\end{equation*}
and use an  abstract result \cite{CV} for doubly nonlinear evolution equation
(see, e.g., \cite{CF01, CF02}).
This method  also seems to be  unsuitable for \eqref{Eq}
since the growth condition for the inverse operator $( \mathcal{L} _{G ,p } )^{-1}$ is not clear
and therefore it is unknown whether the assumptions of \cite{CV} are satisfied.

To cope with these difficulty, 
we shall introduce a new proof by using properties of the hypergraph Laplacian in this paper. 
Roughly speaking, we first show the solvability of the following system, 
where the hypergraph Laplacian $\mathcal{L} _{G ,p }$ in \eqref{Eq} is replaced with 
its Yosida approximation $\partial \varphi _{\lambda }$:
\begin{equation*}
\begin{cases}
~~ u' (t) + \partial \varphi _{\lambda } (\mu (t) ) = 0  , \\
~~\mu (t) = \tau u' (t) + \partial \varphi _{\lambda } (u(t)) + a \| u(t) \| ^{q-2} u(t)
			+ \pi (u(t)) - f(t).
\end{cases}
\end{equation*}
Then by letting $\lambda \to 0 $, we assure the existence of solution to \eqref{Eq}. 
Note that in this method, we treat the term $\tau u' $ as a perturbation term
and then the case where $\tau > 0 $ is more difficult than the case where $\tau = 0 $,
unlike the usual situations.
In the next section, we state the definition of the hypergraph Laplacian 
and its basic properties. 
We first treat the case where $\tau = 0 $ in Section 3 
and  next consider the case where $\tau > 0 $ in Section 4.
In the final section, 
we discuss the convergence of the systems and the solutions 
as $\tau \to 0$.

\section{Preliminary}

Let $G = (V , E  ,w )$ be a (weighted) hypergraph, where
\begin{itemize}
\item $V = \{ 1, 2, \ldots, N \}$ is a finite set (set of vertices), 

\item $E $ is a family of subsets of $V $ with more than one element, 
i.e., $\# e \geq 2 $ for every $e \in E $, where $\# e $ 
is the number of elements contained in $e$ (set of hyperedges), 

\item $w : E \to (0, \infty )$ (weight of hyperedge).

\end{itemize}
This triplet $G$ represents a network in which the vertices labeled
from $1$ to $ N$ are connected by each  hyperedge $e\in E $. 
When  $e = \{ i,j \}$, that is, $e$ consists of just two elements,
then $e $ corresponds to a line segment connecting $i$-th and $j$-th vertices. 
Hence the usual graph can be represented by $ G$ with  $E$ satisfying
$\# e = 2$ for every  $e \in E$. 
The hypergraph is a generalization of the usual graph. When
$\# e \geq 3 $, then $e$ can be regarded as a group of multiple vertices.
For instance, the model of relationship of co-authorship between researchers and
communities in the social media can be described by the hypergraph.
The weight $w (e ) $ describes the strength of the connection of hyperedge $e\in E$
 (see Fig. \ref{Hypergraph}).
Throughout this paper, we assume that the hypergraph $G = (V , E  ,w )$ is connected.
That is, suppose that 
for every $ i , j \in V $, there exist some $ i_1 , \ldots , i _{n-1} \in V $ and $ e_ 1 , \ldots, e_n \in E$ such that 
$ i_{l-1} , i _{l } \in e _l $ holds for
any $ l = 1 , \ldots, n $, where $i_0 = i $ and $ i_n = j $.

\begin{figure}[h]
 \centering
 \includegraphics[keepaspectratio, scale=0.45]
      {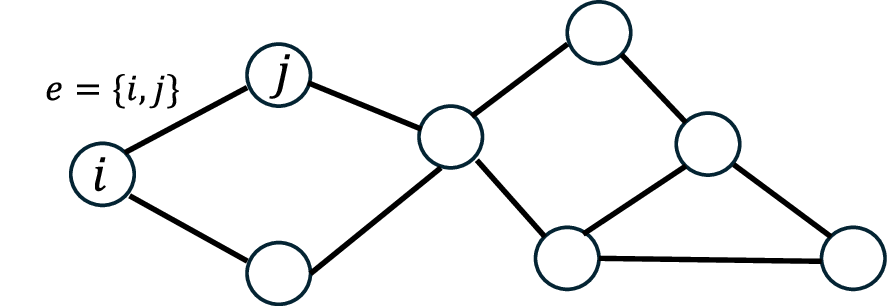}
      \hspace{4mm} 
 \includegraphics[keepaspectratio, scale=0.45]
      {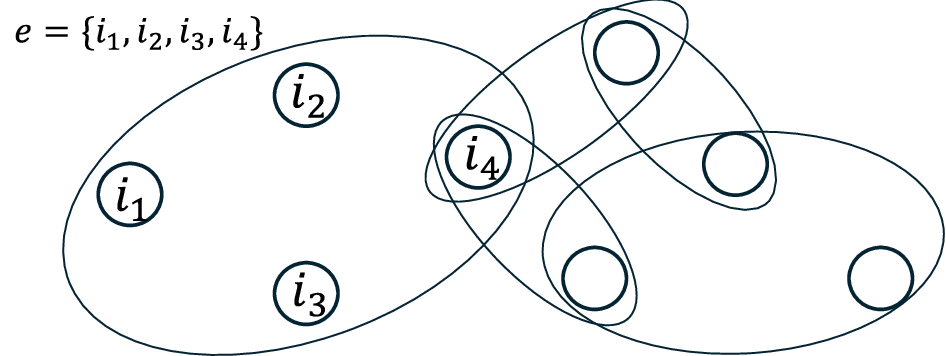}
 \caption[Usual Graph and Hypergraph]{When {\#}$e =2 $ for every $e\in E$,
each edge $e \in E $ can be regarded as a line segment connecting two vertices (left figure, called the {\it usual graph}).
Hypergraph is a generalization of usual graphs 
which represents the grouping of multiple members (right figure).
}
 \label{Hypergraph}
\end{figure}

As for the usual graph, we can define a graph Laplacian matrix by the following way.
Let 
\begin{equation}
w_{ij} := \begin{cases}
~~
w( \{ i , j \} ) ~~& ~~ \text{ if } \{ i, j \} \in E ,  \\
~~~~~0 ~~& ~~ \text{ if } \{ i, j \} \not \in E , 
\end{cases}
~~~~
d_ i := 
\sum_{j=1}^{N} w _{ ij } 
\label{usu-wei}
\end{equation}
and define square matrices of order $N$
by $W := (w _{ij} ) _{ 1 \leq i , j \leq N }$ and $D := \diag ( d_1 ,\ldots , d_ N  )$,
which are called a (weighted) adjacency matrix
and a (weighted) degree matrix, respectively. 
When we consider the random walk on a usual graph, 
we obtain the matrix $D ^{-1} (D - W )$ as the transition matrix. 
Here $\mathcal{L} := D -W $, which essentially describes the movement of the particles,
is called the graph Laplacian.
The graph Laplacian matrix has been used to investigate the structure of network of graphs 
(see, e.g., \cite{Chung, CDS})
and applied to the PageRank \cite{BP, Chung02}
and  the analysis of graph clustering \cite{Chung}.

On the other hand, a hypergraph Laplacian is defined as follows 
(see  Yoshida \cite{Yoshida00}, and  also \cite{FIU01, FIU02, IU}). 
For each hyperedge $e \in E $, we define $ f_ e : \R ^N \to [0 , \infty )  $ by
\begin{equation*}
f_e (x) := \max _{ i , j \in e } |x _i - x _ j |
\end{equation*}
and 
\begin{equation*}
\varphi _{G ,p } (x ) := \frac{1}{p} \sum_{e\in E }  w (e) ( f _e (x ) ) ^{p} ~~~~~p \in [1, \infty ) .  
\end{equation*}
When $G$ is a usual graph, we can write
\begin{equation*}
f_e (x) :=  |x _i - x _ j |~~~\text{where } e = \{ i, j \}
\end{equation*}
and 
\begin{equation*}
\varphi _{G ,p } (x ) := \frac{1}{2p} \sum_{ i , j =1  }^N  w _{ij} |x _ i - x_ j | ^{p} ,
\end{equation*}
where $w_ {ij }$ is defined in \eqref{usu-wei}.
Then it is easy to see that
$\varphi _{G, p }  : \R ^N \to \R $ with $p >1$ is Fr\'{e}chet differentiable 
and especially the differential of $ \varphi _{G, 2 } $
can be represented by the usual graph Laplacian $\mathcal{L} = D - W   $.
On the other hand,
when $ G$ is an ``essential'' hypergraph  
(i.e., there is at least one $e \in E $ with  $\# e \geq 3 $),  
$f_e $ and $\varphi _{G,p } $ are not differentiable in usual sense. 
However, since these are continuous and convex functions, 
we can define subdifferential of them. 
Here let $\phi : \R ^N \to ( - \infty , + \infty ] $ be a 
proper ($\phi \not \equiv + \infty $) lower semi-continuous convex function. 
Then its subgradient  at $x \in \R ^N $ is defined by 
\begin{equation*}
\partial \phi (x) := \{ \eta \in \R ^N ; ~ \eta \cdot (z - x ) \leq \phi (z) - \phi (x) ~~\forall z \in \R^N \} . 
\end{equation*}
Compared with the usual derivative, 
the subgradient $\partial \phi (x )$ possibly contains several elements in general. 
For example, the subgradient of  $\phi (x ) = \| x \|$ at $x = 0 $ 
coincides with 
\begin{equation*}
\partial \phi (0 ) := \{ \eta \in \R ^N ; ~ \| \eta \| \leq 1 \} . 
\end{equation*}
Then we call a set-valued mapping from $x $ to $\partial \phi (x)$ 
the subdifferential operator $\partial \phi : \R ^ N \to 2 ^{\R ^N }$. 
Since $\varphi _{G,p } $ is a continuous and convex function, 
we can define subdifferential of $\varphi _{G,p } $.
We here define the hypergraph Laplacian by  $ \mathcal{L}_{G ,p } = \partial \varphi _{G,p }$, 
i.e., the subdifferential operator of $\varphi _{G ,p }$.  
Since the hypergraph Laplacian is a genuine generalization  of the graph Laplacian matrix $\mathcal{L}$, 
our problem \eqref{Eq} includes a problem on a usual graph,
not only on the hypergraph.

By virtue of  the  maximum rule of subdifferential
(see, e.g., Proposition 2.54  \cite{M-N})
and the chain rule of  the subdifferential (see \cite{CLT}), 
we can obtain the explicit formula of $\partial f _ e $ and 
the hypergraph Laplacian $ \mathcal{L}_{G ,p } = \partial \varphi  _{G ,p  }$. 
Let $B _e \subset \R ^N $ be defined by 
\begin{align*}
B_e & := \text{conv}  \{ \bm{1} _{i }- \bm{1} _ {j} ;~ i , j  \in e  \}   \\
&=  \text{conv}  \LD (\ldots , 0 , \stackrel{i}{\stackrel{\vee}{1}},0, \ldots  ,0 , \stackrel{j}{\stackrel{\vee}{- 1}},0,  \ldots  )
\in \R^ N  ; ~ i ,j   \in e \RD ,   
\end{align*}
where $\bm{1} _ {i } $ is the $i$-th unit vector of the canonical basis of $\R ^N $.
Then  
$\partial f_e (x)$ and 
$\partial \varphi _{G,p } (x) $ can be represented by 
\begin{align*}
\partial f_ e  (x)  &=
\argmax _{b \in B_e} b \cdot x  = \left\{ b_e  \in B_e ;~~b_e \cdot x = \max _{b \in B_e} b \cdot x \right\}  ,   \\
\mathcal{L}_{G,p } (x ) = \partial \varphi _{G, p} (x) 
&=  \sum_{e \in E } w(e) (f_e (x) ) ^{p-1} \partial   f_e (x) \notag \\
	&= \LD \sum_{e \in E } w(e) (f_e (x) ) ^{p-1} b_e ;~b_e \in \argmax _{b \in B_e } b \cdot x  \RD . 
\end{align*}

\begin{Rem}
Since $\mathcal{L} _{G,p } = \partial \varphi _{G,p }$ is an (multi-valued nonlinear) operator on $\R ^N $
(the dimension $N$ coincides with the number of vertices in $V$), 
\eqref{Eq} is an (set-valued nonlinear) ODE system, not a PDE
unlike the usual  Cahn--Hilliard equation. 
Namely, \eqref{Eq} is a system with respect to the solution $u , \mu : [0,T ] \to \R ^N $. 
\end{Rem}

In  \cite{IU}, we found basic properties of the hypergraph Laplacian.
We here state some of them for later use.
Let $\bm{1}  = (1, \ldots, 1 ) $. 
By the definition of $B_e $, we easily get $ b \cdot \bm{1} = 0 $ 
for any $b \in B_e $ and $e \in E $. 
Since $\partial f _e (x) \subset B_e $ for every $e \in E $ and $x \in \R ^N $, 
we have 
\begin{equation*}
 \eta \cdot \bm{1} = 0 
~~~~\forall x \in \R ^N
~~~~\forall \eta \in \partial f_e  (x)
\end{equation*}
and then 
\begin{equation}
 \eta \cdot \bm{1} = 0 
~~~~\forall x \in \R ^N  
~~~~\forall \eta \in \mathcal{L}_{G,p }  (x) .
\label{times1}
\end{equation}
Next, it is obvious that $\| b \| \leq 2 $ and $f _ e (x ) \leq 2 \| x \|$ hold 
for every $b \in B_e $, $e \in E $, and $x \in \R ^N $.
Then there exists a constant $\kappa > 0 $ such that 
\begin{equation}
\varphi _{G,p } (x) \leq \kappa \| x  \| ^p , 
~~~~~
\| \eta  \| \leq \kappa \| x \| ^{p-1} 
\label{bounded} 
\end{equation}
for every 
$ x \in \R ^N  $ and $ \eta \in \mathcal{L} _{G,p } (x)$. 
Moreover, by the explicit formula of $\partial f _e $ and $\partial \varphi _{G, p }$, 
we obtain 
\begin{equation*}
 \eta \cdot x = f_e (x) 
~~~~\forall x \in \R ^N
~~~~\forall \eta \in \partial f_e  (x)
\end{equation*}
and 
\begin{equation}
 \eta \cdot x = p \varphi _{G,p } (x ) 
~~~~\forall x \in \R ^N  
~~~~\forall \eta \in \mathcal{L}_{G,p }  (x) .
\label{timesx}
\end{equation}
We also see the following (see Theorem 2.6 of \cite{IU}):
\begin{Le}
Assume that $G = (V, E ,w )$ is connected. 
For $x = (x _1 , \ldots , x _ N )\in \R ^N $ and $\bm{1} = (1 ,\ldots, 1 )$, let 
\begin{equation*}
\bar{x} := \LC \frac{1}{N} \sum_{i=1}^{N} x _i   \RC \bm{1} . 
\end{equation*}
Then there exists some constant $\kappa ' > 0 $ such that 
\begin{equation}
\| x - \bar{x} \| ^p \leq \kappa ' \varphi _{G,p } (x )
\label{Poincare}
\end{equation}
 for every $ x \in \R ^N $. 
\end{Le}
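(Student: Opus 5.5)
The plan is to prove \eqref{Poincare} by a homogeneity-and-compactness argument. The key structural fact is that the kernel of $\varphi_{G,p}$ is exactly the line spanned by $\bm{1}$. Indeed, $\varphi_{G,p}(x)=0$ forces $f_e(x)=0$ for every $e\in E$, because $w(e)>0$. Hence $x_i=x_j$ whenever $i,j$ lie in a common hyperedge. Connectedness of $G$ then propagates this equality along the chain $i=i_0,i_1,\ldots,i_n=j$, so all coordinates of $x$ coincide, i.e.\ $x\in \R\bm{1}$. Conversely, $\varphi_{G,p}(c\bm{1})=0$ trivially.

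Next I reduce to the subspace $X:=\{x\in\R^N;\ x\cdot\bm{1}=0\}$. Each $f_e$ is invariant under adding multiples of $\bm{1}$, since $f_e(x+c\bm{1})=f_e(x)$, and therefore $\varphi_{G,p}(x)=\varphi_{G,p}(x-\bar{x})$. Moreover $x-\bar{x}\in X$. It therefore suffices to show that $\|y\|^p\le\kappa'\varphi_{G,p}(y)$ for all $y\in X$. Each $f_e$ is positively $1$-homogeneous, so $\varphi_{G,p}(sy)=s^p\varphi_{G,p}(y)$ for $s\ge0$. The inequality is therefore scale invariant, and it suffices to verify it on the unit sphere $S:=\{y\in X;\ \|y\|=1\}$.

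The sphere $S$ is compact and $\varphi_{G,p}$ is continuous, being a finite sum of powers of maxima of continuous functions. Hence $m:=\min_{S}\varphi_{G,p}$ is attained at some $y_0\in S$. If $m=0$, then by the first step $y_0=c\bm{1}$; but $y_0\cdot\bm{1}=0$ gives $cN=0$, so $y_0=0$, contradicting $\|y_0\|=1$. Thus $m>0$, and we take $\kappa':=1/m$. Then for $y\in X\setminus\{0\}$,
\begin{equation*}
\varphi_{G,p}(y)=\|y\|^p\varphi_{G,p}(y/\|y\|)\ge m\|y\|^p ,
\end{equation*}
and the case $y=0$ is trivial.

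The only genuinely non-routine point is the kernel characterization, since it is where connectedness enters; everything else is standard compactness. I expect that step to be the main (though mild) obstacle. The alternative would be an explicit constant obtained by summing $|x_i-x_j|\le\sum_l f_{e_l}(x)$ along paths and applying H\"older; this is more explicit but unnecessary here.
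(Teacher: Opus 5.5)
Your proof is correct and complete. The kernel characterization $\varphi_{G,p}(x)=0 \Leftrightarrow x\in\R\bm{1}$ (where connectedness enters), translation invariance along $\bm{1}$, $p$-homogeneity, and compactness of the unit sphere $S$ of $\bm{1}^{\perp}$ together give the lemma with $\kappa'=1/\min_{S}\varphi_{G,p}$.

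The paper gives no proof of this lemma; it quotes Theorem 2.6 of \cite{IU}, so your argument is a self-contained substitute for that citation. Your constant is not explicit, but existence of some $\kappa'$ is all the paper uses: the lemma appears only in the proof of Theorem~\ref{Th4}, to get $\int_0^T\|\mu_\tau\|^p\,dt\le C_8$. The explicit path-and-H\"older route you mention would only add a quantitative dependence on the graph.

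The degenerate case $N=1$, where $S$ is empty and your $\min_S$ is undefined, is trivial because then $x=\bar{x}$.
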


The hypergraph Laplacian is defined as a subdifferential operator. 
Then we can apply the standard results for the maximal monotone operator to the hypergraph Laplacian. 
We here state some of them. 
For $\lambda >0 $, we define 
\begin{equation}
\varphi _{\lambda } (x) 
:= 
\inf _{y \in \R ^N }
\LC
\frac{1}{2 \lambda }  \| x - y \| ^2 + \varphi _{G ,p } (y ) \RC , 
\label{MY}
\end{equation}
which is called the Moreau--Yosida regularization of $\varphi _{G,p }$. 
Then $\varphi _{\lambda } : \R ^N \to [ 0 , \infty ) $ becomes a continuous convex function. 
Moreover,  $\varphi _{\lambda } : \R ^N \to [ 0 , \infty ) $ is Fr\'{e}chet differentiable on $\R ^N $
and its derivative (which coincides with its subdifferential) is equal to the Yosida approximation of $\partial \varphi _{G,p }$  
(see Theorem 2.9 of \cite{Bar}). 
Here, the resolvent of $\partial \varphi _{G,p } $ is defined by 
$J_{\lambda } x := ( \id + \lambda \partial \varphi _{G,p }) ^{-1}$ with $\lambda >0 $, 
i.e., $y = J _{\lambda } x $ is a unique solution to 
\begin{equation*}
 y + \lambda \partial \varphi _{G,p } (y) \ni x 
\end{equation*}
for given $x \in \R ^N$
and the Yosida approximation of $\partial \varphi _{G,p } $ is defined by 
\begin{equation*}
\partial \varphi _{\lambda } (x) := \frac{x - J _{\lambda }x }{ \lambda } .
\end{equation*}
It is easy to see that 
\begin{equation*}
 \partial \varphi _{\lambda } (x ) \in \partial \varphi _{G,p } (J _{\lambda } x )
\end{equation*}
holds for every $x \in \R ^N $ and $\lambda > 0 $.  
We can get the followings (see Proposition 2.3, Theorem 2.9, and Proposition 2.2 of \cite{Bar}, respectively):
\begin{Le}
The mapping $\partial \varphi _{\lambda } : \R ^N \to \R ^N $ is single-valued and Lipschitz continuous. 
More precisely,
\begin{equation}
\| \partial \varphi _{\lambda } (x) -  \partial \varphi _{\lambda } (y) \|
\leq \frac{1}{\lambda } \| x - y \| 
\label{Lipschitz} 
\end{equation}
holds for any $x , y \in \R ^N $. 
\end{Le}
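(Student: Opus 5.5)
The plan is to derive the Lipschitz bound \eqref{Lipschitz} from the nonexpansiveness of the resolvent $J_\lambda$, which in turn follows from monotonicity of $\partial \varphi_{G,p}$. Single-valuedness is immediate once $J_\lambda$ is well defined, since $\partial\varphi_\lambda(x) = (x - J_\lambda x)/\lambda$ is then given by an explicit formula.

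First, I will check that $J_\lambda$ is well defined. For fixed $x$, the function $y \mapsto \frac{1}{2\lambda}\|x-y\|^2 + \varphi_{G,p}(y)$ is continuous, strictly convex and coercive on $\R^N$. Hence it has a unique minimizer $y$, and by the sum rule for subdifferentials (valid since $\varphi_{G,p}$ is finite and continuous everywhere) this $y$ satisfies $y + \lambda\partial\varphi_{G,p}(y) \ni x$. The solution is unique, because if $y_1,y_2$ both solve the inclusion, then monotonicity gives $\|y_1-y_2\|^2 \le 0$. Monotonicity itself comes from adding the two subgradient inequalities $\eta_1\cdot(y_2-y_1)\le \varphi(y_2)-\varphi(y_1)$ and $\eta_2\cdot(y_1-y_2)\le\varphi(y_1)-\varphi(y_2)$, which yields $(\eta_1-\eta_2)\cdot(y_1-y_2)\ge 0$.

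The key estimate is the following. Set $y_1 = J_\lambda x$ and $y_2 = J_\lambda y$, and put $\xi_1 = \partial\varphi_\lambda(x) = (x-y_1)/\lambda \in \partial\varphi_{G,p}(y_1)$, and similarly $\xi_2 = \partial\varphi_\lambda(y)$. Writing $x - y = (y_1 - y_2) + \lambda(\xi_1 - \xi_2)$ and expanding the square gives
\begin{equation*}
\|x-y\|^2 = \|y_1-y_2\|^2 + 2\lambda(\xi_1-\xi_2)\cdot(y_1-y_2) + \lambda^2\|\xi_1-\xi_2\|^2 .
\end{equation*}
By monotonicity the middle term is nonnegative, and the first term is trivially nonnegative. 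Therefore $\lambda^2\|\xi_1-\xi_2\|^2 \le \|x-y\|^2$, which is exactly \eqref{Lipschitz}.

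There is no real obstacle here. The only point needing care is the existence of $J_\lambda x$, that is, the surjectivity of $\id + \lambda\partial\varphi_{G,p}$. This is settled by the minimization argument above, which uses only the finiteness and convexity of $\varphi_{G,p}$ on $\R^N$ and avoids appealing to Minty's theorem.
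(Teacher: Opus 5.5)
Your proof is correct. It takes a different route from the paper only in that the paper gives no proof here. It simply defers to the general theory of maximal monotone operators in Hilbert spaces (Proposition 2.3 of Barbu), where the fact that $J_\lambda$ is everywhere defined comes from the maximal monotonicity of $\partial\varphi_{G,p}$.

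You instead give a self-contained argument that uses only what is special to this setting. Since $\varphi_{G,p}$ is finite, convex and nonnegative on all of $\R^N$, you obtain $J_\lambda x$ by directly minimizing the coercive, strictly convex function $y\mapsto\frac{1}{2\lambda}\|x-y\|^2+\varphi_{G,p}(y)$. The only other ingredient is the elementary monotonicity of $\partial\varphi_{G,p}$.

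Your expansion of $\|x-y\|^2$ is the standard key step; it is equivalent to the cocoercivity estimate $(\xi_1-\xi_2)\cdot(x-y)\ge\lambda\|\xi_1-\xi_2\|^2$. It also gives the nonexpansiveness $\|J_\lambda x-J_\lambda y\|\le\|x-y\|$ in the same line.

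The citation buys generality: it covers arbitrary maximal monotone operators, and the paper uses the same framework for its other cited lemmas. Your route buys elementarity, and it makes visible that nothing beyond convexity and finiteness of $\varphi_{G,p}$ is used.

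One wording point. Your opening sentence says the bound is derived from nonexpansiveness of $J_\lambda$, but nonexpansiveness alone only gives the constant $2/\lambda$, via $\lambda(\xi_1-\xi_2)=(x-y)-(y_1-y_2)$. What actually produces $1/\lambda$ is discarding the nonnegative cross term by monotonicity, which is exactly what your computation does.
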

\begin{Le}
For every $x\in \R ^N $, 
\begin{equation}
\varphi _{G,p} (J _{\lambda } x ) \leq \varphi _{\lambda } (x ) \leq \varphi _{G,p  } (x ). 
\label{MY-01}
\end{equation}
\end{Le}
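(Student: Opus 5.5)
The plan is to establish the two inequalities in \eqref{MY-01} separately, using only the variational definition \eqref{MY} of $\varphi_\lambda$ and the characterization of $J_\lambda x$ as the unique solution of $y + \lambda \partial\varphi_{G,p}(y) \ni x$.

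For the upper bound $\varphi_\lambda(x) \leq \varphi_{G,p}(x)$, test the infimum in \eqref{MY} with the admissible choice $y = x$. The quadratic term then vanishes and we get
\begin{equation*}
\varphi_\lambda(x) \leq \frac{1}{2\lambda}\|x-x\|^2 + \varphi_{G,p}(x) = \varphi_{G,p}(x).
\end{equation*}

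For the lower bound, the key step is to show that the infimum in \eqref{MY} is attained exactly at $y = J_\lambda x$. Consider
\begin{equation*}
\Phi(y) := \frac{1}{2\lambda}\|x-y\|^2 + \varphi_{G,p}(y).
\end{equation*}
It is convex, continuous and coercive on $\R^N$, since $\varphi_{G,p} \geq 0$, so it has a minimizer. Because the quadratic part is finite and differentiable everywhere, the sum rule for subdifferentials applies and gives
\begin{equation*}
\partial \Phi(y) = \frac{y-x}{\lambda} + \partial\varphi_{G,p}(y).
\end{equation*}
The optimality condition $0 \in \partial\Phi(y)$ is therefore equivalent to $y + \lambda\partial\varphi_{G,p}(y) \ni x$. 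By uniqueness of the resolvent, the minimizer is $y = J_\lambda x$.

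Consequently
\begin{equation*}
\varphi_\lambda(x) = \frac{1}{2\lambda}\|x - J_\lambda x\|^2 + \varphi_{G,p}(J_\lambda x) \geq \varphi_{G,p}(J_\lambda x),
\end{equation*}
since the first term is nonnegative. This is the left inequality of \eqref{MY-01}.

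The only step needing care is identifying the minimizer with $J_\lambda x$. If one prefers to avoid the sum rule, there is a direct check. Put $y_0 = J_\lambda x$ and $\eta = (x-y_0)/\lambda \in \partial\varphi_{G,p}(y_0)$. For any $y \in \R^N$, the subgradient inequality gives
\begin{equation*}
\varphi_{G,p}(y) \geq \varphi_{G,p}(y_0) + \eta\cdot(y-y_0).
\end{equation*}
Expanding the square,
\begin{equation*}
\frac{1}{2\lambda}\|x-y\|^2 = \frac{1}{2\lambda}\|x-y_0\|^2 - \eta\cdot(y-y_0) + \frac{1}{2\lambda}\|y-y_0\|^2.
\end{equation*}
Adding the two yields $\Phi(y) \geq \Phi(y_0)$ for every $y$, so $y_0$ minimizes $\Phi$ directly.
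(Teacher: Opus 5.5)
Your proof is correct. The paper gives no argument of its own here and cites Theorem 2.9 of Barbu's monograph, whose proof is the same as yours: the infimum defining $\varphi_\lambda(x)$ is attained at $J_\lambda x$, so $\varphi_\lambda(x) = \frac{1}{2\lambda}\|x - J_\lambda x\|^2 + \varphi_{G,p}(J_\lambda x)$, and the upper bound comes from testing with $y = x$. Your direct check via the subgradient inequality and expanding the square is also valid, and in fact gives the slightly stronger bound $\Phi(y) \geq \Phi(J_\lambda x) + \frac{1}{2\lambda}\|y - J_\lambda x\|^2$.
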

\begin{Le}
For every $x\in \R ^N $,  
\begin{equation}
\| \partial \varphi _{\lambda } (x ) \|  \leq \|  ( \partial \varphi _{G,p }) ^{\circ } (x)  \|
\label{MY-02}
\end{equation}
holds,  where $  ( \partial \varphi _{G,p }) ^{\circ } $ is the 
minimal section of $\partial \varphi _{G,p } $, namely, 
$  ( \partial \varphi _{G,p }) ^{\circ }   $ satisfies $( \partial \varphi _{G,p }) ^{\circ } (x)  \in \partial \varphi _{G,p } (x)$ and
\begin{equation*}
\|  ( \partial \varphi _{G,p }) ^{\circ } (x)  \|  \leq \| \eta \| ~~~~\forall \eta \in  \partial \varphi _{G,p } (x). 
\end{equation*}

\end{Le}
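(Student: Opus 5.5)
We prove \eqref{MY-02} directly from the definition of the resolvent and the monotonicity of $\partial \varphi _{G,p}$. Fix $x \in \R ^N$ and $\lambda > 0$, and write $y := J_\lambda x$. By definition of $J_\lambda$ there is some element of $\partial \varphi _{G,p}(y)$ equal to $(x-y)/\lambda$. This element is exactly the Yosida approximation:
\begin{equation*}
\partial \varphi _{\lambda}(x) = \frac{x - y}{\lambda} \in \partial \varphi _{G,p}(y).
\end{equation*}
Set $\eta ^{\circ} := (\partial \varphi _{G,p})^{\circ}(x)$, which exists and belongs to $\partial \varphi _{G,p}(x)$. Indeed, $\partial \varphi _{G,p}(x)$ is nonempty, since $\varphi _{G,p}$ is finite and convex on $\R ^N$. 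It is also closed and convex, so it has a unique element of minimal norm.

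Monotonicity of the subdifferential, applied to the pairs $(y, (x-y)/\lambda)$ and $(x, \eta ^\circ)$, gives
\begin{equation*}
\LC \frac{x-y}{\lambda} - \eta ^{\circ} \RC \cdot (x - y) \geq 0,
\qquad\text{i.e.}\qquad
\frac{1}{\lambda} \| x - y \| ^2 \leq \eta ^{\circ} \cdot (x-y) \leq \| \eta ^{\circ} \| \, \| x - y \| .
\end{equation*}
If $x = y$, then $\partial \varphi _\lambda (x) = 0$ and the claim is trivial. Otherwise we divide by $\| x - y \|$ and obtain $\| x - y \| / \lambda \leq \| \eta ^\circ \|$. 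This is precisely $\| \partial \varphi _\lambda (x) \| \leq \| (\partial \varphi _{G,p})^{\circ}(x) \|$.

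The only points needing justification are standard. First, $J_\lambda$ is well defined: existence and uniqueness of $y$ solving $y + \lambda \partial \varphi _{G,p}(y) \ni x$ follow from maximal monotonicity, e.g.\ by taking $y$ to be the unique minimizer in \eqref{MY}. Second, the minimal section exists. Neither is a real obstacle; the core of the argument is the one-line monotonicity estimate above, which is also the step that needs the most care.
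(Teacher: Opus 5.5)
Your argument is correct and is exactly the standard proof behind the result the paper simply cites without proof (Proposition 2.2 of Barbu): use $\partial\varphi_\lambda(x)\in\partial\varphi_{G,p}(J_\lambda x)$, test monotonicity against $\eta^{\circ}\in\partial\varphi_{G,p}(x)$, and use $x-J_\lambda x=\lambda\,\partial\varphi_\lambda(x)$. There is one sign slip to fix. For the pairs $(y,(x-y)/\lambda)$ and $(x,\eta^{\circ})$, monotonicity reads $\bigl(\frac{x-y}{\lambda}-\eta^{\circ}\bigr)\cdot(y-x)\geq 0$, equivalently $\bigl(\eta^{\circ}-\frac{x-y}{\lambda}\bigr)\cdot(x-y)\geq 0$, and this is what gives your bound $\frac{1}{\lambda}\|x-y\|^2\leq\eta^{\circ}\cdot(x-y)$; the inequality as you displayed it, with the factor $(x-y)$, would give the reverse estimate.
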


Since the subdifferential operator becomes maximal monotone, 
we can use the following facts:

\begin{Le}
\label{monotonicity}
Let $\phi = \varphi _{G,p }$ or $\phi = \varphi _{\lambda }$ with $\lambda > 0 $. 
Then 
\begin{equation*}
 ( \eta _1 - \eta _2 ) \cdot (x_1 - x_2 ) \geq 0 
\end{equation*}
holds for any $ x_i \in \R ^N $ and $\eta _i \in \partial \phi (x_ i )$ with $i=1,2 $
(see Theorem 2.8 of \cite{Bar} and the definition of the monotonicity). 
\end{Le}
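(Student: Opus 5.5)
The monotonicity statement follows directly from the definition of the subdifferential, and the same argument covers both $\phi = \varphi_{G,p}$ and $\phi = \varphi_{\lambda}$. I will first check that each $\phi$ is a proper convex function from $\R^N$ into $[0,\infty)$. For $\varphi_{G,p}$ this holds because each $f_e$ is a maximum of the convex functions $x \mapsto |x_i - x_j|$, the map $s \mapsto s^p$ is convex and nondecreasing on $[0,\infty)$ for $p \geq 1$, and weighted sums with $w(e) > 0$ preserve convexity. For $\varphi_{\lambda}$ the paper has already noted that it is a continuous convex function, and that its subdifferential is the singleton consisting of the Yosida approximation $\partial \varphi_{\lambda}(x)$. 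In both cases $\partial \phi(x)$ is the set defined earlier by the subgradient inequality.

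The argument then consists of two applications of that inequality. Take $\eta_1 \in \partial \phi(x_1)$ and $\eta_2 \in \partial \phi(x_2)$. Testing the definition of $\partial\phi(x_1)$ with $z = x_2$, and the definition of $\partial\phi(x_2)$ with $z = x_1$, gives
\begin{equation*}
\eta_1 \cdot (x_2 - x_1) \leq \phi(x_2) - \phi(x_1), \qquad \eta_2 \cdot (x_1 - x_2) \leq \phi(x_1) - \phi(x_2).
\end{equation*}
Both values $\phi(x_1)$ and $\phi(x_2)$ are finite, so adding the two inequalities cancels the right-hand sides. This yields $-(\eta_1 - \eta_2)\cdot(x_1 - x_2) \leq 0$, which is the claimed inequality.

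For $\phi = \varphi_{\lambda}$ one could instead work directly with the formula $\partial\varphi_\lambda(x) = (x - J_\lambda x)/\lambda$. The idea would be to write $x_i = J_\lambda x_i + \lambda \partial\varphi_\lambda(x_i)$ and use $\partial\varphi_\lambda(x_i) \in \partial\varphi_{G,p}(J_\lambda x_i)$. The already established monotonicity of $\partial\varphi_{G,p}$ then gives $(\eta_1 - \eta_2)\cdot(x_1 - x_2) \geq \lambda\|\eta_1 - \eta_2\|^2 \geq 0$. This route is not needed, though, since the general subgradient argument already covers $\varphi_\lambda$. The only point requiring care is finiteness of $\phi$, which is needed to cancel the right-hand sides, and it is guaranteed here because both functions are real-valued. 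I therefore expect no real obstacle.
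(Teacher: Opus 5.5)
Your proof is correct and is essentially the approach the paper relies on. The paper gives no argument of its own; it cites Theorem 2.8 of \cite{Bar} and the definition of monotonicity, and the monotonicity part of that theorem is proved exactly as you do: add the two subgradient inequalities, which is legitimate because $\varphi_{G,p}$ and $\varphi_\lambda$ are finite-valued convex functions. Your alternative resolvent computation for $\varphi_\lambda$, giving the bound $\lambda\|\eta_1-\eta_2\|^2$, is also valid, though not needed.
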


\begin{Le}
\label{demiclosedness}
Let $\phi = \varphi _{G,p }$ or $\phi = \varphi _{\lambda }$ with $\lambda > 0 $,
then the operator $\partial \phi $ is demiclosed in $L^2 (0,T ;\R ^N )$.  
Namely, assume $ x_n  \in L^2 (0, T ; \R ^N )  $,  $\eta _ n  \in L ^2 (0,T ; \R ^N )$,
and $ \eta _ n  (t ) \in \partial \phi  (x _n  (t))$ for a.e. $t \in [0,T]$ with $n \in \N $.  
Let $x _n \to x $ strongly in $L ^2 (0,T ; \R ^N ) $ and   
$\eta _n  \rightharpoonup \eta $ weakly in $L ^2 (0,T ; \R ^N ) $. 
Then $\eta (t) \in \partial \phi (x (t ))$ holds for a.e. $t\in [0,T ]$ (see Proposition 3.4 of \cite{BCP}). 
\end{Le}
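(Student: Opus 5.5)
The plan is to pass to the limit directly in the integrated subgradient inequality. This only uses that $\phi$ is convex, finite, continuous and nonnegative on $\R ^N$, which holds for $\phi = \varphi _{G,p}$ and for $\phi = \varphi _{\lambda }$. Alternatively one could note that $\eta (t) \in \partial \phi (x(t))$ a.e.\ defines the subdifferential of $\Phi (x) = \int _0^T \phi (x(t))\,dt$ on $L^2(0,T;\R ^N)$. That operator is maximal monotone, and maximal monotone operators are strong--weak closed. I prefer the elementary route because it avoids checking that identification.

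\textbf{Step 1 (tested inequality).} Fix $z \in \R ^N$ and a nonnegative $\psi \in L^\infty (0,T)$. By the definition of the subgradient, $\eta _n(t)\cdot (z - x_n(t)) \le \phi (z) - \phi (x_n(t))$ for a.e.\ $t$. This bound also shows that $0 \le \phi (x_n(\cdot)) \le \phi (z) + \eta _n\cdot (x_n - z) \in L^1(0,T)$. Multiplying by $\psi$ and integrating gives
\begin{equation*}
\int _0^T \psi\, \eta _n\cdot (z - x_n)\,dt \le \int _0^T \psi \,\bigl(\phi (z) - \phi (x_n)\bigr)\,dt .
\end{equation*}

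\textbf{Step 2 (limit).} On the left-hand side, $\int \psi\, \eta _n\cdot z \to \int \psi\, \eta\cdot z$ by weak convergence, since $\psi z \in L^2$. Also $\int \psi\, \eta _n \cdot x_n \to \int \psi\, \eta\cdot x$, because $\psi x_n \to \psi x$ strongly and $\eta _n \rightharpoonup \eta$ weakly in $L^2$. On the right-hand side, pass to a subsequence with $x_n(t) \to x(t)$ a.e. By continuity of $\phi$, $\phi (x_n(t)) \to \phi (x(t))$ a.e. Since $\psi\phi (x_n) \ge 0$, Fatou's lemma gives $\liminf \int \psi \phi (x_n) \ge \int \psi \phi (x)$. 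Hence the limsup of the right-hand side is at most $\int \psi (\phi (z) - \phi (x))$. This is exactly the direction needed, and it is the one delicate point, since we only have lower semicontinuity-type control on $\phi (x_n)$. We obtain
\begin{equation*}
\int _0^T \psi \,\bigl[\eta\cdot (z - x) - \phi (z) + \phi (x)\bigr]\,dt \le 0
\end{equation*}
for all $\psi \ge 0$ in $L^\infty$. The integrand is integrable because $\eta , x \in L^2$ and $\phi (x) \le \liminf \phi (x_n)$ with a finite integral by Fatou. Therefore $\eta (t)\cdot (z - x(t)) \le \phi (z) - \phi (x(t))$ for $t$ outside a null set $N_z$.

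\textbf{Step 3 (all $z$ simultaneously).} Let $z$ range over the countable set $\mathbb{Q}^N$ and take $N_0 = \bigcup _z N_z$, which is still null. For $t \notin N_0$ the inequality holds for all rational $z$. Both sides are continuous in $z$ (again using continuity of $\phi$), so it extends to all $z \in \R ^N$. This means $\eta (t) \in \partial \phi (x(t))$ for a.e.\ $t \in [0,T]$.

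The main obstacle is the limit of the nonlinear term $\phi (x_n)$ in Step 2. Its integrability and the correct inequality direction both come from nonnegativity of $\phi$ together with Fatou's lemma along an a.e.-convergent subsequence. The rest is routine weak--strong pairing.
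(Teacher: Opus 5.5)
Your proof is correct, and it differs from the paper's, which gives no argument of its own. The paper only cites Proposition 3.4 of \cite{BCP}, so it rests on the abstract theory of maximal monotone operators: the pointwise realization of $\partial\phi$ in $L^2(0,T;\R^N)$ is maximal monotone, and hence closed under strong--weak limits. That is essentially the alternative you mention and set aside.

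You instead prove the closedness by hand, in three steps:
\begin{itemize}
\item You test the subgradient inequality against a fixed $z$ and a nonnegative $\psi$.
\item You pass to the limit. The term $\eta_n\cdot x_n$ is handled by the weak--strong pairing. The term $\phi(x_n)$ is handled by Fatou's lemma along an a.e.-convergent subsequence; nonnegativity of $\phi$ is used here, and this step also gives $\phi(x)\in L^1$.
\item You recover the pointwise inequality for every $z\in\R^N$ from a countable dense set, using continuity of $\phi$.
\end{itemize}

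What each route buys: your argument is self-contained. It only needs $\phi$ to be a finite, continuous convex function on $\R^N$ that is bounded below, which holds for both $\varphi_{G,p}$ and $\varphi_\lambda$. Nonnegativity is not essential either: an affine minorant $\phi(y)\ge\phi(0)+\eta_0\cdot y$ with $\eta_0\in\partial\phi(0)$ would serve equally well in Fatou's lemma. The cited abstract result covers arbitrary maximal monotone operators in general Hilbert spaces, not only subdifferentials of real-valued convex functions. Its cost is that it relies on identifying the $L^2$-realization as maximal monotone, which your route avoids.
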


\begin{Le}
\label{wwlimsup}
Let $\phi = \varphi _{G,p }$ or $\phi = \varphi _{\lambda }$ with $\lambda > 0 $. 
Assume $ x_n  \in L^2 (0, T ; \R ^N )  $,  $\eta _ n  \in L ^2 (0,T ; \R ^N )$,
and
$ \eta _ n  (t ) \in \partial \phi  (x _n  (t))$ for a.e. $t \in [0,T]$ and any $n \in \N $.  
Let $x _n \rightharpoonup x $ and $\eta _n  \rightharpoonup \eta $
weakly in $L ^2 (0,T ; \R ^N ) $ and 
\begin{equation*}
\limsup _{n\to \infty} \int_{0}^{T}  x _n (t)  \cdot  \eta _n (t) dt \leq 
\int_{0}^{T} x (t) \cdot \eta (t)  dt  . 
\end{equation*}
Then $\eta (t) \in \partial \phi (x (t ))$ holds for a.e. $t\in [0,T ]$ (see Lemma 1.2 of \cite{BCP}). 
\end{Le}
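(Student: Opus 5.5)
The statement is the weak--weak limsup lemma (Lemma \ref{wwlimsup}), and the plan is to prove it by the standard Minty-type argument. The first step is to lift the pointwise monotonicity of Lemma \ref{monotonicity} to the integrated level. For any $z,\zeta \in L^2(0,T;\R^N)$ with $\zeta(t) \in \partial\phi(z(t))$ for a.e. $t$, monotonicity gives
\begin{equation*}
\int_0^T (\eta_n(t) - \zeta(t))\cdot (x_n(t) - z(t))\,dt \geq 0 .
\end{equation*}
Expand the integrand. The mixed terms $\eta_n\cdot z$ and $\zeta\cdot x_n$ converge by weak convergence, since $z,\zeta$ are fixed elements of $L^2$. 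For the product term $\eta_n\cdot x_n$ we use the $\limsup$ hypothesis. Taking $\limsup$ therefore yields
\begin{equation*}
\int_0^T (\eta(t)-\zeta(t))\cdot (x(t)-z(t))\,dt \geq 0
\end{equation*}
for every such pair $(z,\zeta)$.

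The second step is to conclude that $\eta(t)\in\partial\phi(x(t))$ a.e. This amounts to maximal monotonicity of the realization $\mathcal{A}$ of $\partial\phi$ in $L^2(0,T;\R^N)$, where $\zeta\in\mathcal{A}z$ means $\zeta(t)\in\partial\phi(z(t))$ a.e. I would not cite this abstractly. Instead I would use the resolvent directly: set $z := J_1(x+\eta)$ pointwise, where $J_1$ is the resolvent of $\partial\phi$ with parameter $1$. For $\phi=\varphi_\lambda$ this resolvent is $(\id+\partial\varphi_\lambda)^{-1}$, which exists because $\partial\varphi_\lambda$ is maximal monotone (it is Lipschitz and monotone by \eqref{Lipschitz} and Lemma \ref{monotonicity}). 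Then set $\zeta := x+\eta-z$, so that $\zeta(t)\in\partial\phi(z(t))$.

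We must check that $z,\zeta\in L^2$. The resolvent is nonexpansive, hence continuous, so $z$ is measurable. Moreover $\|J_1 y\|\le \|J_1 0\|+\|y\|$ and $J_1 0=0$ (because $0\in\partial\phi(0)$, as $\phi$ attains its minimum at $0$). These give $z,\zeta \in L^2$.

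Plugging this pair into the integrated inequality, and using $\eta-\zeta = z-x$, gives
\begin{equation*}
-\int_0^T\|x(t)-z(t)\|^2\,dt\ge 0 .
\end{equation*}
Hence $z=x$ a.e., so $\zeta=\eta$ a.e. and $\eta(t)\in\partial\phi(x(t))$ for a.e. $t$.

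The main point needing care is the measurability and integrability of the test pair built from the resolvent. The nonexpansiveness of $J_1$ settles it, so no real obstacle remains. Lemma \ref{demiclosedness} follows as a special case, since strong convergence of $x_n$ together with weak convergence of $\eta_n$ gives convergence of $\int_0^T x_n\cdot\eta_n\,dt$.
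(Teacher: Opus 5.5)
Your proof is correct and is essentially the argument behind the result the paper cites (Lemma 1.2 of \cite{BCP}, applied in $L^2(0,T;\R^N)$). The paper gives no proof of its own, and the cited lemma is exactly this Minty-type passage to the limit in the monotonicity inequality, followed by maximality. What you add is an explicit check, via the pointwise resolvent test pair $z=J_1(x+\eta)$, $\zeta=x+\eta-z$, that the $L^2$ realization of $\partial\phi$ is maximal monotone, which the citation takes for granted.
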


\begin{Le}
Let $\phi = \varphi _{G,p }$ or $\phi = \varphi _{\lambda }$ with $\lambda > 0 $. 
Assume $ x \in W ^{1,2 } (0, T ; \R ^N )  $,  $\eta \in L ^2 (0,T ; \R ^N )$,
and $ \eta (t ) \in \partial \phi (x (t))$ for a.e. $t \in [0,T]$. 
Then $\phi (x) : [0,T] \to \R $ is absolutely continuous and 
\begin{equation}
\eta (t) \cdot x'(t) = \frac{d}{dt} \phi (x (t )) 
\label{Chain}
\end{equation}
holds for a.e. $t \in [0,T ]$ (see Lemma 4.1 of \cite{Bar}). 
\end{Le}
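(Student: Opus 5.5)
This is the chain rule for subdifferentials along absolutely continuous curves (Lemma 4.1 of \cite{Bar}). In the finite-dimensional setting I would prove it directly from the subgradient inequality and the local Lipschitz continuity of $\phi$.

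\emph{Step 1: absolute continuity of $t\mapsto \phi(x(t))$.} Both $\varphi_{G,p}$ and $\varphi_\lambda$ are finite convex functions on $\R^N$, so they are locally Lipschitz. Concretely:
\begin{itemize}
\item for $\varphi_{G,p}$, estimate \eqref{bounded} gives a Lipschitz constant $\kappa R^{p-1}$ on the ball of radius $R$;
\item for $\varphi_\lambda$, the gradient is globally Lipschitz by \eqref{Lipschitz}, hence bounded on bounded sets.
\end{itemize}
Since $x\in W^{1,2}(0,T;\R^N)$, the curve $x$ is absolutely continuous with values in some ball $\{\|y\|\le R\}$. Composing an absolutely continuous function with a function that is Lipschitz on its range yields an absolutely continuous function. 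Hence $\phi(x(\cdot))$ is differentiable a.e., and I may fix a $t$ where $x'(t)$ exists, $\frac{d}{dt}\phi(x(t))$ exists, and $\eta(t)\in\partial\phi(x(t))$.

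\emph{Step 2: two one-sided difference quotients.} Apply the definition of $\partial\phi(x(t))$ with $z=x(t\pm h)$, $h>0$:
\begin{equation*}
\eta(t)\cdot (x(t\pm h)-x(t)) \le \phi(x(t\pm h))-\phi(x(t)).
\end{equation*}
For the $+$ sign, divide by $h$ and let $h\downarrow0$; this gives $\eta(t)\cdot x'(t)\le \frac{d}{dt}\phi(x(t))$. For the $-$ sign, divide by $-h<0$, which reverses the inequality; letting $h\downarrow0$ gives $\eta(t)\cdot x'(t)\ge \frac{d}{dt}\phi(x(t))$. Together these yield \eqref{Chain} at every such $t$, hence for a.e. $t$.

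The integrability of $\eta$ plays no role in this pointwise argument. The only step needing care is Step 1, namely checking that $\phi$ is Lipschitz on the bounded range of $x$ so that the composition is absolutely continuous. In finite dimensions this is routine via \eqref{bounded} and \eqref{Lipschitz}, so I expect no genuine obstacle.
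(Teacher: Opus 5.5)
Your proof is correct, but it is not the paper's route: the paper gives no argument and only cites Lemma~4.1 of \cite{Bar}.

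That lemma is stated for a proper lower semicontinuous convex function on a general Hilbert space, which may take the value $+\infty$. In that setting $t\mapsto\phi(x(t))$ has no a priori regularity. The standard proof first establishes absolute continuity through the Moreau--Yosida regularization. One writes $\varphi_\lambda(x(t))-\varphi_\lambda(x(s))=\int_s^t \partial\varphi_\lambda(x(\sigma))\cdot x'(\sigma)\,d\sigma$ and dominates the integrand using $\|\partial\varphi_\lambda(x(\sigma))\|\le\|(\partial\phi)^{\circ}(x(\sigma))\|\le\|\eta(\sigma)\|$, as in \eqref{MY-02}. This is exactly where $\eta\in L^2$ is needed. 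Only after letting $\lambda\to0$ is the derivative identified, by the same two one-sided difference quotients that make up your Step~2.

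You instead use that both functions in question are finite and convex on $\R^N$, hence Lipschitz on the bounded range of $x$. Absolute continuity then follows at once, and, as you note, the integrability of $\eta$ plays no role.

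Your argument is therefore shorter and self-contained for the two functions the paper actually uses. The cited result buys generality, covering extended-valued nonsmooth energies in infinite dimensions, at the price of the regularization step.
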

We also review Gronwall's inequality (see Lemme A.5 of \cite{Bre}).
\begin{Le}
Let $\Xi : [0,T ] \to \R $ be continuous and satisfy 
\begin{equation*}
\frac{1}{2} |\Xi (t)| ^2 \leq \frac{1}{2} |a| ^2 + \int_{0}^{t} g (s ) \Xi (s) ds ~~~\forall t \in [0,T ]   
\end{equation*}
with some constant $a $ and non-negative function $g \in L^1 (0,T ; \R )$. 
Then $\Xi $ satisfies 
\begin{equation*}
|\Xi (t)| \leq |a| + \int_{0}^{t} g (s ) ds ~~~\forall t \in [0,T].  
\end{equation*}
\label{Gronwall}
\end{Le}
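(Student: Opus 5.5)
The plan is to reduce the bound to a simple quadratic inequality for the scalar function $\Xi$. Set $m := \max_{0\le s\le t}|\Xi(s)|$ for a fixed $t\in[0,T]$. This maximum exists because $\Xi$ is continuous on a compact interval. Choose $t_0\in[0,t]$ with $|\Xi(t_0)| = m$.

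Evaluate the hypothesis at $t_0$. Since $g\ge 0$ and $\Xi(s)\le |\Xi(s)|\le m$ for $s\le t_0$, we get
\begin{equation*}
\frac{1}{2} m^2 \le \frac{1}{2}|a|^2 + \int_0^{t_0} g(s)\Xi(s)\,ds \le \frac12 |a|^2 + m\int_0^{t} g(s)\,ds .
\end{equation*}
Put $G := \int_0^t g(s)\,ds \ge 0$. Then $m^2 - 2Gm - |a|^2 \le 0$. Solving this quadratic inequality gives
\begin{equation*}
m \le G + \sqrt{G^2+|a|^2} \le |a| + 2G .
\end{equation*}
This loses a factor $2$, so the sharper constant $1$ needs a finer argument.

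For the sharp bound, first take $a\ne0$ and introduce the continuous nondecreasing function
\begin{equation*}
\Psi(t) := \frac12|a|^2 + \int_0^t g(s)|\Xi(s)|\,ds ,
\end{equation*}
which is absolutely continuous with $\Psi\ge \frac12|a|^2>0$. The hypothesis gives $\frac12|\Xi(t)|^2 \le \Psi(t)$, that is, $|\Xi(t)|\le\sqrt{2\Psi(t)}$. Hence, for a.e.\ $t$,
\begin{equation*}
\Psi'(t) = g(t)|\Xi(t)| \le g(t)\sqrt{2\Psi(t)} .
\end{equation*}
Dividing by $\sqrt{\Psi}>0$ shows
\begin{equation*}
\frac{d}{dt}\sqrt{2\Psi(t)} = \frac{\Psi'(t)}{\sqrt{2\Psi(t)}} \le g(t) .
\end{equation*}
Integrating from $0$ to $t$, and using that $\sqrt{2\Psi}$ is absolutely continuous because $\Psi$ is bounded away from $0$, yields
\begin{equation*}
|\Xi(t)|\le\sqrt{2\Psi(t)} \le |a| + \int_0^t g(s)\,ds .
\end{equation*}

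If $a=0$, apply the previous case with $a$ replaced by $\varepsilon>0$; the hypothesis still holds. Letting $\varepsilon\to0$ gives the claim.

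The main point needing care is the differentiation step: we must keep $\Psi>0$ so that $\sqrt{\Psi}$ is differentiable, which is exactly why the $\varepsilon$-regularization for $a=0$ is needed.
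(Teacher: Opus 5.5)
Your argument is correct, and it is essentially the standard proof of Brezis's Lemme A.5, which the paper cites instead of proving the statement: compare with $\Psi(t)=\frac12|a|^2+\int_0^t g|\Xi|\,ds$, integrate $\frac{d}{dt}\sqrt{2\Psi}\le g$, and regularize by $\varepsilon$ (Brezis replaces $a$ by $|a|+\varepsilon$ throughout, which avoids your case split for $a=0$). The preliminary quadratic estimate with the factor $2$ is not needed and can be dropped.
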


\section{Case of $\tau = 0 $}

In this section, we consider the case of $\tau = 0 $, i.e., 
\begin{equation*}
\begin{cases}
~~ u' (t) + \mathcal{L} _{G,p} (\mu (t) ) \ni 0 
			~~&~~t \in (0,T) , \\
~~\mu (t) \in  \mathcal{L} _{G,p} (u(t)) + a \| u(t) \| ^{q-2} u(t)
			+ \pi (u(t)) - f(t) ~~&~~ t \in (0,T) , \\
~~u(0) = u_0 ,  
\end{cases}
\end{equation*}
and prove the following fact: 
\begin{Th}
\label{TH01}
For every  $u _ 0 \in \R^ N $ and  $f \in W ^{1,1} (0,T ;\R ^N )$, 
\eqref{Eq} with $\tau = 0 $ possesses at least one solution 
satisfying
\begin{align*}
u \in W ^{1, \infty } (0,T ;\R ^N) , 
~~~\mu \in L ^{\infty} (0,T ;\R ^N).
\end{align*}
\end{Th}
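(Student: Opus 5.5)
We follow the strategy announced in the introduction: first solve the system in which $\mathcal{L}_{G,p}$ is replaced by its Yosida approximation $\partial\varphi_\lambda$, then derive estimates uniform in $\lambda$ and pass to the limit $\lambda\to 0$.

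\textbf{Approximate problem.} For $\lambda>0$, substituting $\mu$ into the first equation turns the regularized system into the ODE
\begin{equation*}
u' = -\partial\varphi_\lambda\bigl(\partial\varphi_\lambda(u)+a\|u\|^{q-2}u+\pi(u)-f\bigr), \qquad u(0)=u_0 .
\end{equation*}
By \eqref{Lipschitz}, the Lipschitz continuity of $\pi$ and the local Lipschitz continuity of $u\mapsto\|u\|^{q-2}u$ (recall $q>2$), the right-hand side is locally Lipschitz in $u$ and continuous in $t$. Carath\'eodory--Picard theory therefore gives a unique local solution $u_\lambda$, with $\mu_\lambda$ defined by the second equation. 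Global existence on $[0,T]$ will follow from the a priori bounds below.

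\textbf{Uniform estimates.} Since $\partial\varphi_\lambda(x)\in\partial\varphi_{G,p}(J_\lambda x)$, property \eqref{times1} gives $u_\lambda'\cdot\bm{1}=0$, so the mean $\bar u_\lambda=\bar u_0$ is conserved. For the energy estimate we test the first equation with $\mu_\lambda$ and the second with $u_\lambda'$, and use the chain rule \eqref{Chain}. With $W(u)=\frac aq\|u\|^q-k(u)$ this yields
\begin{equation*}
\frac{d}{dt}\Bigl(\varphi_\lambda(u_\lambda)+\tfrac aq\|u_\lambda\|^q+k(u_\lambda)-f\cdot u_\lambda\Bigr)+\partial\varphi_\lambda(\mu_\lambda)\cdot\mu_\lambda = -f'\cdot u_\lambda .
\end{equation*}
The dissipation term is nonnegative by monotonicity (Lemma \ref{monotonicity}, since $\partial\varphi_\lambda(0)=0$). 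Because $k$ grows at most quadratically and $q>2$, the energy is coercive, i.e.\ bounded below by $c\|u\|^q-C$. Gronwall's inequality (Lemma \ref{Gronwall}) with $f\in W^{1,1}$ then gives $\sup_t\|u_\lambda\|\le C$ and $\int_0^T\partial\varphi_\lambda(\mu_\lambda)\cdot\mu_\lambda\,dt\le C$, uniformly in $\lambda$, using $\varphi_\lambda(u_0)\le\varphi_{G,p}(u_0)$ from \eqref{MY-01}. This already yields $\|u_\lambda'\|_{L^\infty}$ bounds? Not yet: it only controls $u_\lambda$ in $L^\infty$.

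\textbf{Main obstacle: $L^\infty$ bounds on $u_\lambda'$ and $\mu_\lambda$.} These are needed for $u\in W^{1,\infty}$ and $\mu\in L^\infty$, and this is the step I expect to be hardest. I would differentiate formally in time, or, more rigorously, compare $u_\lambda(t+h)$ with $u_\lambda(t)$ through difference quotients. Test the difference of the first equations with $\mu_\lambda(t+h)-\mu_\lambda(t)$, and the difference of the second with $u_\lambda(t+h)-u_\lambda(t)$, divided by $h^2$. Monotonicity of $\partial\varphi_\lambda$ removes the Laplacian contributions. The monotone term $a\|u\|^{q-2}u$ has a favorable sign, and since $u'\cdot\bm{1}=0$ the remaining terms give
\begin{equation*}
\frac{d}{dt}\tfrac12\|u_\lambda'\|_*^2 \le L\|u_\lambda'\|^2+\|f'\|\,\|u_\lambda'\| .
\end{equation*}
The difficulty is that this naturally lives in a dual norm associated with the nonlinear operator, which is unavailable. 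The plan is instead to test with $u_\lambda'$ directly on the mean-zero subspace, using the Poincar\'e inequality \eqref{Poincare} and growth \eqref{bounded} to absorb the $\pi$ term, and then close with Gronwall using $f'\in L^1$ and $u_\lambda'(0)$. Here $u_\lambda'(0)$ is controlled via \eqref{MY-02} by the minimal section at $u_0$ and at $\mu_\lambda(0)$, which is bounded through \eqref{bounded}. Once $u_\lambda'$ is bounded, $\mu_\lambda$ is bounded: its mean is obtained by averaging the second equation (using \eqref{times1}), and its oscillation is controlled via \eqref{timesx} and \eqref{Poincare}, since $p\varphi_{G,p}(J_\lambda\mu_\lambda)=\partial\varphi_\lambda(\mu_\lambda)\cdot J_\lambda\mu_\lambda$ is bounded by $\|u_\lambda'\|\,\|\mu_\lambda\|$. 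This also makes the local solution global.

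\textbf{Passage to the limit.} By Ascoli--Arzel\`a, $u_\lambda\to u$ uniformly, and weakly-star in $W^{1,\infty}$. Along a subsequence $\mu_\lambda\rightharpoonup\mu$ and $\partial\varphi_\lambda(u_\lambda)\rightharpoonup\xi$ weakly-star in $L^\infty$. Since $\|J_\lambda u_\lambda-u_\lambda\|=\lambda\|\partial\varphi_\lambda(u_\lambda)\|\to0$ and $J_\lambda u_\lambda\to u$ strongly, demiclosedness (Lemma \ref{demiclosedness}) gives $\xi\in\mathcal{L}_{G,p}(u)$. For the first equation $\mu_\lambda$ converges only weakly, so we use Lemma \ref{wwlimsup} with $x_n=J_\lambda\mu_\lambda$, which satisfies $J_\lambda\mu_\lambda-\mu_\lambda\to0$. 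The required limsup comes from computing $\int_0^T(-u_\lambda')\cdot\mu_\lambda\,dt$ through the second equation: the $u_\lambda$-dependent terms converge strongly, and the term $-\int_0^T u_\lambda'\cdot\partial\varphi_\lambda(u_\lambda)\,dt=\varphi_\lambda(u_\lambda(0))-\varphi_\lambda(u_\lambda(T))$ has its limsup bounded by $\varphi_{G,p}(u_0)-\varphi_{G,p}(u(T))$ via \eqref{MY-01} and lower semicontinuity. The same identity for the limit, by \eqref{Chain}, then concludes the proof.
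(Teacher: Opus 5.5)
The overall architecture of your proposal is the paper's. You use Yosida regularization, an energy estimate with integration by parts in the $f$-term plus Gronwall to get $\sup_t\|u_\lambda(t)\|\le C$, and identification of both inclusions through $J_\lambda$, Lemma \ref{demiclosedness} and Lemma \ref{wwlimsup}. Reducing the regularized system to a single locally Lipschitz ODE and applying Picard--Lindel\"of is a cleaner substitute for the paper's contraction on $\mu$. Your treatment of $\xi$ via $J_\lambda u_\lambda\to u$ is more careful than the paper's one-line appeal to demiclosedness.

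\textbf{The gap.} The step you call the main obstacle, as written, produces no bound. Write $\delta_h g:=(g(\cdot+h)-g)/h$. Testing the differenced first equation with $\delta_h\mu_\lambda$, and the differenced second equation with $\delta_h u_\lambda$, gives two pairings, $\delta_h u_\lambda'\cdot\delta_h\mu_\lambda\le 0$ and a lower bound for $\delta_h\mu_\lambda\cdot\delta_h u_\lambda$. They involve different test functions, so they cannot be combined into a differential inequality for $\|\delta_h u_\lambda\|^2$ without inverting the nonlinear operator. That is exactly the dual norm you admit is unavailable. Testing the first equation directly with $\delta_h u_\lambda$ leaves $\delta_h[\partial\varphi_\lambda(\mu_\lambda)]\cdot\delta_h u_\lambda$, which has no sign, and the Poincar\'e inequality does not repair this. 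Both your compactness of $u_\lambda$ in $W^{1,\infty}$ and the estimate $\|J_\lambda\mu_\lambda-\mu_\lambda\|=\lambda\|\partial\varphi_\lambda(\mu_\lambda)\|\to 0$ used in the limit depend on this missing bound.

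\textbf{The fix.} For $\tau=0$ there is no obstacle at all. The second equation gives $\mu_\lambda(t)$ explicitly as a function of $u_\lambda(t)$ and $f(t)$. Once $\sup_t\|u_\lambda(t)\|\le C$, the bounds \eqref{MY-02} and \eqref{bounded}, the Lipschitz continuity of $\pi$, and $f\in W^{1,1}(0,T;\R^N)\subset C([0,T];\R^N)$ give
\[
\sup_{t}\|\mu_\lambda(t)\|\le \kappa C^{p-1}+aC^{q-1}+LC+\|\pi(0)\|+\max_{t}\|f(t)\| .
\]
The first equation, again with \eqref{MY-02} and \eqref{bounded}, then gives $\sup_t\|u_\lambda'(t)\|=\sup_t\|\partial\varphi_\lambda(\mu_\lambda(t))\|\le\kappa\sup_t\|\mu_\lambda(t)\|^{p-1}$, uniformly in $\lambda$. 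This is exactly how the paper obtains \eqref{uniform002}--\eqref{uniform004}. Mean conservation and the Poincar\'e inequality are not needed here; they only matter in the $\tau\to0$ analysis, where $\mu_\tau$ is no longer an explicit function of $u_\tau$. If you replace your difference-quotient paragraph with this short argument, the rest of your proof goes through.
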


\begin{proof}
Let $\partial \varphi _{\lambda }$ be the Yosida approximation of
$\partial \varphi _{G, p } $ with a parameter $\lambda > 0 $
and consider the following equations:
\begin{equation}
\begin{cases}
~~ u' (t) + \partial \varphi _{\lambda } (\mu (t) ) = 0 
			~~&~~t \in (0,T) , \\
~~\mu (t) = \partial \varphi _{\lambda } (u(t)) + a \| u(t) \| ^{q-2} u(t)
			+ \pi (u(t)) - f(t) ~~&~~ t \in (0,T) , \\
~~u(0) = u_0 .
\end{cases}
\label{EqLam}
\tag{P$_ \lambda $}
\end{equation}
We define a mapping $\mathscr{S} : L ^{\infty } (0, T' ; \R ^ N )
\to  L ^{\infty } (0, T' ; \R ^ N )$ in the following procedure
($T' \in ( 0 , T ] $ will be fixed later): 
\begin{itemize}
\item[i)] Fix $ \mu \in  L ^{\infty } (0, T' ; \R ^ N )$ and define 
$u _{\mu } $ by
\begin{equation*}
\begin{cases*}
~~ u' _{\mu } (t) = -  \partial \varphi _{\lambda } (\mu (t) ), \\
~~ u_{\mu}(0) = u_0. 
\end{cases*}
\end{equation*} 
By \eqref{bounded} and \eqref{MY-02}, we note that  $u _{\mu }$ belongs to $W ^{1, \infty } (0, T' ; \R ^ N )$. 

\item[ii)] By using $u _ {\mu } $ in i), we 
define $\mu _u \in L ^{\infty } (0, T' ; \R ^ N )$
by 
\begin{equation*}
\mu _ u  (t) = \partial \varphi _{\lambda } (u _{\mu }(t))
		+ a \| u_{\mu }(t) \| ^{q-2} u_{\mu }(t)
			+ \pi (u_{\mu } (t)) - f(t) .
\end{equation*}
To define $\mu _u \in L ^{\infty } (0, T' ; \R ^ N )$, 
we here assume $ f \in L ^{\infty } (0, T' ; \R ^ N ) $. 

\item[iii)] Define 
$\mathscr{S} : L ^{\infty } (0, T' ; \R ^ N )
\to  L ^{\infty } (0, T' ; \R ^ N )$ by 
$\mathscr{S} (\mu) :=  \mu _ u $.
\end{itemize}
If we can show that $\mathscr{S}$ is a contraction map in $L ^{\infty } (0, T' ; \R ^ N )$
with some sufficiently small $T  ' > 0 $, the fixed point 
$ \mu \in  L ^{\infty } (0, T' ; \R ^ N )$ and $ u _{\mu } \in W ^{1, \infty } (0, T ' ; \R ^N )$
becomes a solution to \eqref{EqLam}.

Define 
\begin{equation}
K_M := \LD \mu \in L ^\infty (0,T ' ; \R ^N ); ~~\sup _{0\leq t \leq T' } \| \mu (t) \| 
	\leq M \RD
	\label{KM}
\end{equation}
with $M > 0 $. 
By the first equation of \eqref{EqLam} and \eqref{bounded}, \eqref{MY-02}, 
we get 
\begin{align*}
\sup _{0 \leq t \leq T' } \| u' _{\mu}(t) \|
&\leq \sup _{0 \leq t \leq T' } \| \partial \varphi _{\lambda  } (\mu (t)))\|
\leq \sup _{0 \leq t \leq T' } \|  (\partial \varphi _{G,p })  ^{\circ }  (\mu (t)) \|
\leq \kappa \sup _{0 \leq t \leq T' } \| \mu (t) \| ^{p-1} \\
&\leq \kappa M ^{p-1}.
\end{align*}
Hence we have 
\begin{align*}
\| u_{\mu} (t) - u_0 \| \leq \int_{0}^{t} \| u '_{\mu} (\sigma )\| d\sigma 
\leq T' \kappa M ^{p-1} 
\end{align*}
and 
\begin{equation}
\sup _{0 \leq t \leq T' }\| u_{\mu} (t) \| 
\leq \| u_0 \| + T' \kappa M ^{p-1} .
\label{u_mubound} 
\end{equation}
From the second equation of  \eqref{EqLam},  
\begin{align*}
\| \mu _ u  (t) \| 
	& \leq \|  (\partial \varphi _{G,p } ) ^{\circ } (u _{\mu } (t )) \|
	+ a \| u _{\mu } (t) \| ^{q-1}
	+ \| \pi (u _{\mu } (t)) - \pi (0 )\| + \| \pi (0) \|
	+ \| f(t) \| \\
	&\leq \kappa \| u _{\mu } (t ) \| ^{p-1} 
	+ a \| u _{\mu } (t) \| ^{q-1}
	+ L \| u _{\mu } (t) \| + \| \pi (0) \|
	+ \| f(t) \| , 
\end{align*}
where $L > 0 $ is a Lipschitz constant of $\pi : \R ^N \to \R ^N $. 
Then we obtain 
\begin{align*}
\sup _{0 \leq t \leq T'}\| \mu _ u  (t) \| 
	&\leq \kappa \LC \| u_0 \| + T' \kappa M ^{p-1} \RC  ^{p-1} 
	+ a \LC \| u_0 \| + T' \kappa M ^{p-1} \RC ^{q-1} \\
	& \hspace{2cm} + L \LC \| u_0 \| + T' \kappa M ^{p-1} \RC + \| \pi (0) \|
	+\sup _{0 \leq t \leq T'} \| f(t) \| .
\end{align*}
By fixing  
\begin{align*}
M& := 
\kappa \LC \| u_0 \| + 1 \RC  ^{p-1} 
	+ a \LC \| u_0 \| + 1 \RC ^{q-1} \\
	& \hspace{2cm} + L \LC \| u_0 \| + 1 \RC + \| \pi (0) \|
	+\sup _{0 \leq t \leq T'} \| f(t) \|  , \\
T'&:= 1 / 	\kappa M ^{p-1} , 
\end{align*}
we obtain 
$ \sup _{0 \leq t \leq T'}\| \mu _ u  (t) \|  \leq M $, 
namely, $ \mu \in K_M \Rightarrow \mathscr{S} ( \mu ) \in K_M $ holds. 
Here by \eqref{u_mubound}, $u _{\mu } $ satisfies 
\begin{equation}
\sup _{0 \leq t \leq T' }\| u_{\mu} (t) \| 
\leq \| u_0 \| + 1
\label{u_mubound02} 
\end{equation}
for every $\mu \in K_M $.

Next we show that $\mathscr{S}$ is a contraction mapping. 
Let $\mu _1 , \mu _2 \in K_M $. 
Then by the equation, $u _{ \mu _1 } $ and $u _{\mu _2 }$ satisfy 
\begin{align*}
\| u'  _{ \mu _1 } (t) - u' _{\mu _2 } (t) \|
= \| \partial \varphi _{ \lambda } (\mu _1 (t) ) -\partial \varphi _{ \lambda } (\mu _2 (t)) \|
\leq \frac{1}{\lambda } \| \mu _1 (t) - \mu _ 2 (t) \|  , 
\end{align*}
where we use \eqref{Lipschitz}. 
This immediately leads to 
\begin{align}
\sup _{ 0\leq t \leq T' } \| u  _{ \mu _1 } (t) - u _{\mu _2 } (t) \|
\leq \frac{T' }{\lambda } \sup _{ 0\leq t \leq T' } \| \mu _1 (t) - \mu _ 2 (t) \|  . 
\label{est-difference}
\end{align}
From the second equation of \eqref{EqLam}, 
\begin{align*}
\| \mu _{u_1} (t) - \mu _{u_2} (t) \|
&\leq \frac{1}{\lambda } \| u_{\mu _1 } (t) - u_{\mu _2 } (t) \|
+ 
a \LN \| u_{\mu _1} (t) \| ^{q-2} u_{\mu _1}(t) -\| u_{\mu _2} (t) \| ^{q-2} u_{\mu _2} (t) \RN \\
&\hspace{3cm} +
L \| u_{\mu _1} (t) -u_{\mu _2} (t) \| . 
\end{align*}
By the Taylor expansion of $x \mapsto \| x\| ^{q-2} x $ with $q > 2 $, 
there exists some function $\theta : [0,T'] \to [ 0,1 ]$
such that 
\begin{align*}
&\LN \| u_{\mu _1} (t) \| ^{q-2} u_{\mu _1} (t) -\| u_{\mu _2}(t)  \| ^{q-2} u_{\mu _2} (t) \RN \\
&  = {} 
 (q-2 )\| \theta (t) u_{\mu _1} (t)+ (1- \theta (t))u_{\mu _2} (t) \| ^{q-2}  \|  u_{\mu _1} (t) - u_{\mu _2} (t) \|  \\
&  \leq {}  
(q-2 ) 
(  \|  u_{\mu _1} (t) \| + \| u_{\mu _2} (t) \| ) ^{q-2} 
\LN u_{\mu _1} (t) - u_{\mu _2} (t)  \RN \\
&  \leq {}   
 2^{q-2} (q-2 ) 
 ( \| u_0  \| +1 ) ^{q-2}   
\LN u_{\mu _1} (t) - u_{\mu _2} (t)  \RN ,
\end{align*}
where we apply \eqref{u_mubound02} to the above. 
Henceforth, let 
$\ell (\| u_0 \|) :=
 2^{q-2} (q-2 ) 
 ( \| u_0  \| +1 ) ^{q-2}   $. 
From  \eqref{est-difference},  we can derive 
\begin{align*}
& \sup _{0 \leq t \leq T' } \| \mu _{u_1} (t) - \mu _{u_2} (t) \| \\
& \leq  \frac{1}{\lambda } \sup _{0 \leq t \leq T' }  \| u_{\mu _1 } (t) - u_{\mu _2 } (t) \| + 
a\ell (\| u_0  \|) \sup _{0 \leq t \leq T' } \LN u_{\mu _1} (t) - u_{\mu _2} (t)  \RN \\
&\hspace{3cm} +
L \sup _{0 \leq t \leq T' } \| u_{\mu _1} (t) -u_{\mu _2} (t) \| \\
&
\leq \frac{T'}{\lambda ^2 } \sup _{0 \leq t \leq T' }  \| \mu _1  (t) - \mu _2  (t) \|
+ 
\frac{aT'\ell (\| u_0  \|)}{\lambda }  \sup _{0 \leq t \leq T' } \| \mu _1 (t)-\mu _2 (t)  \| \\ 
&\hspace{3cm} +
\frac{LT'}{\lambda } \sup _{0 \leq t \leq T' } \| \mu _1 (t) -\mu _2 (t) \| . 
\end{align*}
We here fix $T '> 0$ again by 
\begin{equation*}
T':= 
\min \LD 
\frac{1}{2} 
\LC
\frac{1}{\lambda ^2 }
+
\frac{a\ell (\| u_0  \|)}{\lambda }
+
\frac{L}{\lambda } \RC ^{-1} , 
\frac{1 }{	\kappa M ^{p-1}  } 
\RD ,  
\end{equation*}
then 
\begin{equation*}
\sup _{0 \leq t \leq T' } \| \mu _{u_1} (t) - \mu _{u_2} (t) \|
\leq 
\frac{1}{2}  \sup _{0 \leq t \leq T' } \| \mu _1 (t) -\mu _2 (t) \| . 
\end{equation*}
Therefore for a sufficiently small $T '> 0 $, 
$\mathscr{S} $ is a contraction map on $K_M $
and 
the existence of a time local solution to \eqref{EqLam} is assured.

To discuss the convergence as $\lambda \to 0 $, 
we next show the uniform boundedness of solutions
with respect to the parameter $\lambda > 0 $. 
Let $( u _{\lambda } , \mu _{\lambda })$ be a solution to \eqref{EqLam} with $\lambda > 0 $. 
Here, it is easy to see that $\varphi _{G ,p } (0 ) = 0 $ 
and $0 \in \partial \varphi _{G ,p } (0 )$ by the definition of the subdifferential.
Then we have $ J _{\lambda }  0  = 0 $ and $ \partial \varphi _{\lambda } (0) = 0 $. 
By Lemma \ref{monotonicity}, 
\begin{equation*}
\partial \varphi _{\lambda }  (\mu _{\lambda } (t)) \cdot \mu _{\lambda } (t)
=
( \partial \varphi _{\lambda }  (\mu _{\lambda } (t)) -  \partial \varphi _{\lambda }  (0)  ) \cdot  ( \mu _{\lambda } (t) -0 ) 
\geq 0 . 
\end{equation*}
Multiplying the first equation of \eqref{EqLam} by $\mu _{\lambda }(t) $, 
we get 
\begin{equation*}
u' _{\lambda } (t) \cdot \mu _{\lambda }  (t) \leq 0 .
\end{equation*}
Multiplying the second equation of \eqref{EqLam} by $u ' _{\lambda } (t)$, 
we obtain 
\begin{equation*}
u' _{\lambda }(t) \cdot \mu _{\lambda } (t)
=
\frac{d}{dt} \varphi _\lambda  (u _{\lambda }(t))
+ \frac{a}{q} \frac{d}{dt} \| u _{\lambda }(t) \| ^q 
+  \frac{d}{dt} k (u _{\lambda } (t)) - f(t) \cdot u '_\lambda (t) 
\end{equation*}
for a.e. $t \in (0, T' )$ (we here use \eqref{Chain}).
Hence 
\begin{equation*} 
\frac{d}{dt} \varphi _\lambda  (u _{\lambda }(t))
+ \frac{a}{q} \frac{d}{dt} \| u _{\lambda }(t) \| ^q 
+  \frac{d}{dt} k (u _{\lambda } (t)) \leq f(t) \cdot u '_\lambda (t) . 
\end{equation*}
Integrating this inequality between $[0, t]$, we have 
\begin{align*}
\varphi _\lambda  (u _{\lambda }(t))
+ \frac{a}{q}  \| u _{\lambda }(t) \| ^q 
+  k (u _{\lambda } (t)) 
 \leq 
\int_{0}^{t} f(s) \cdot u '_\lambda (s)ds 
+
\varphi _\lambda  (u _0 )
+ \frac{1}{q} \| u _0 \| ^q 
+   k (u _ 0 ) .
\end{align*}
Since we assume 
$f \in W ^{1,1} (0,T ; \R ^N )$, 
\begin{align*}
\int_{0}^{t} f(s) \cdot u '_\lambda (s)ds 
&=
f(t) \cdot u_ \lambda (t) 
-
f(0) \cdot u_ 0
- \int_{0}^{t} f'(s) \cdot u _\lambda (s)ds \\
&
 \leq 
C _1 \| f(t) \| ^{q' }+ \frac{a}{2q} \|  u_ \lambda (t) \| ^q
+
\| f(0) \| \| u_ 0 \|
+ \int_{0}^{t} \| f'(s) \| \| u _\lambda (s) \| ds 
\end{align*}
by the integration by parts, 
where $q ' := q / (q -1 ) $ is the H\"{o}lder conjugate of $q $. 
Here and henceforth, 
$c _1 , C _1  > 0 $ are general constants independent of $\lambda , T' $. 
From $\varphi _{\lambda } \geq 0$ and \eqref{MY-01}, 
we can derive 
\begin{align*}
& \frac{1}{2q}  \| u _{\lambda }(t) \| ^q 
+  k (u _{\lambda } (t)) \\
&
 \leq 
C_1  \| f(t) \| ^{q' }
+
\| f(0) \| \| u_ 0 \|
+ \int_{0}^{t} \| f'(s) \| \| u _\lambda (s) \| ds
+
\varphi   (u _0 )
+ \frac{1}{q} \| u _0 \| ^q 
+   k (u _ 0 ) .
\end{align*}
By the Lipschitz continuity of $\pi $, 
\begin{align*}
|k (x) | 
&\leq |k (x) - k (0 )| + |k(0)| \\
&\leq \LZ  \int_{0}^{1}  \frac{d}{ds} k (s x) ds \RZ + |k(0)|
	= \LZ  \int_{0}^{1}  \pi (s x) \cdot x ds \RZ + |k(0)| \\
&\leq \LC  \int_{0}^{1} \| \pi (s x) \| ^2 ds \RC ^{1/2}  \|  x \|  + |k(0)| \\
&\leq
\LC \LC  \int_{0}^{1} \| \pi (s x) - \pi (0)\| ^2 ds \RC ^{1/2}
+
  \LC  \int_{0}^{1} \|  \pi (0)\| ^2 ds \RC ^{1/2}
\RC \|  x \|  + |k(0)| \\
&\leq
\LC \frac{L}{2}   \|  x \| 
+ \|  \pi (0)\| 
\RC \|  x \|  + |k(0)| , 
\end{align*}
namely, 
\begin{equation}
|k (x) | 
\leq 
C _1 (\| x\| ^2 +1 )
\label{est-of-k} 
\end{equation}
holds for any $x \in \R ^  N$.
Therefore we obtain 
\begin{align*}
& \frac{1}{2q}  \| u _{\lambda }(t) \| ^q 
- C_1  (\| u _{\lambda }(t) \|  ^2 +1 )
\\
&
 \leq 
C_1  \| f(t) \| ^{q' }
+
\| f(0) \| \| u_ 0 \|
+ \int_{0}^{t} \| f'(s) \| \| u _\lambda (s) \| ds
+
\varphi   (u _0 )
+ \frac{1}{q} \| u _0 \| ^q 
+   k (u _ 0 ) . 
\end{align*}
Since $q > 2 $, 
\begin{equation*}
c _1 \|  u _{\lambda }(t) \| ^2 
\leq C _1 +  \int_{0}^{t} \| f'(s) \| \| u _\lambda (s) \| ds . 
\end{equation*}
Hence by Lemma \ref{Gronwall} (Gronwall's inequality). 
\begin{equation}
 \sup _{0\leq t \leq T  } \|  u _{\lambda }(t) \| 
\leq 
C_1  \LC 1 +  \int_{0}^{T} \| f'(t) \|  dt \RC . 
\label{uniform001}
\end{equation}
From \eqref{bounded}, 
\begin{equation*}
 \sup _{0\leq t \leq T  } \| \partial \varphi _{\lambda } (u _{\lambda } (t) ) \|
\leq 
 \sup _{0\leq t \leq T  }\| ( \partial \varphi _{G,p} ) ^{\circ} (u _{\lambda } (t) ) \|
\leq 
\kappa  \sup _{0\leq t \leq T  } \| u _{\lambda } (t) \| ^ {p-1} \leq C_1 . 
\end{equation*}
Therefore, we obtain by the second equation of  \eqref{EqLam}
\begin{equation}
 \sup _{0\leq t \leq T  } \|  \mu  _{\lambda }(t) \| 
\leq 
C _1 ,
\label{uniform002}
\end{equation}
and by the first equation of  \eqref{EqLam}
\begin{equation}
 \sup _{0\leq t \leq T  } \|  u'  _{\lambda }(t) \| 
\leq 
C _1 .
\label{uniform003}
\end{equation}
According to the uniform boundedness \eqref{uniform001}--\eqref{uniform003}, 
we can show that the solution can be globally extended until given $T > 0 $. 
Moreover, \eqref{bounded} yields 
\begin{equation}
 \sup _{0\leq t \leq T  } \| \partial \varphi _{\lambda } ( \mu  _{\lambda }(t) ) \| 
\leq 
C _1 .
\label{uniform004}
\end{equation}

By the uniform boundedness \eqref{uniform001}--\eqref{uniform004}, 
there exist
subsequences $ \{ u _{\lambda _n  } \} _ {n \in \N } ,  \{ \mu _{\lambda _n  } \} _ {n \in \N } $
and $u , \mu , \xi , \zeta : [0,T ] \to \R ^N $ such that 
\begin{align*}
& u _{\lambda _n } \to u  & \text{strongly in } C( [0,T] ; \R ^N ) ,\\
& u' _{\lambda _n } \rightharpoonup  u'  & \ast \text{-weakly in } L ^{\infty }
			( 0,T ; \R ^N ) , \\
& \mu _{\lambda _n } \rightharpoonup  \mu  & \ast \text{-weakly in } L ^{\infty }
			( 0,T ; \R ^N ) ,\\
& \partial \varphi _{\lambda _n } ( u  _{\lambda _n }(t) ) \rightharpoonup \xi
		& \ast \text{-weakly in } L ^{\infty }
			( 0,T ; \R ^N ) , \\
& \partial \varphi _{\lambda _n } ( \mu  _{\lambda _n }(t) ) \rightharpoonup  \zeta
		& \ast \text{-weakly in } L ^{\infty }
			( 0,T ; \R ^N ) .
\end{align*}
By virtue of Lemma \ref{demiclosedness}, 
we easily have $\xi (t) \in \partial \varphi _{G,p } (u (t)) $ for a.e. $t\in [0,T]$.
Then $ u, \mu $ and $\zeta $ satisfy 
\begin{equation*}
\begin{cases}
~~ u' (t) + \zeta (t) = 0 ,\\
~~\mu (t) =  \xi (t) + a \| u(t) \| ^{q-2} u(t)
			+ \pi (u(t)) - f(t)  .
\end{cases}
\end{equation*}
So if we can say that 
$\zeta (t) \in \partial \varphi _{G,p } (\mu (t)) $ holds for a.e. $t\in [0,T]$,
we conclude that the limit $(u , \mu )$ is a solution to the original problem \eqref{Eq} with $\tau = 0 $.

To see this, we multiply the second equation of \eqref{EqLam} with $\lambda = \lambda _ n $
by $u ' _{\lambda _n }$ and take the limit $n \to \infty $.
Then we have 
\begin{align*}
&\liminf _{n \to \infty } \int_{0}^{T} \mu _{\lambda _n } (t) \cdot u' _{\lambda _n} (t) dt\\
= ~ &  \liminf _{n \to \infty }
\int_{0}^{T} 
\LC 
\frac{d}{dt }\varphi _{\lambda _n } (u _{\lambda _n } (t)  )
+ \frac{a}{q}\frac{d}{dt } \| u _{\lambda _n } (t) \| ^q 
+ \frac{d}{dt } k (u _{\lambda _n} (t))
-u ' _{\lambda _n } (t) \cdot f(t)   
\RC dt \\
=~ &  \liminf _{n \to \infty }
\LC 
\varphi _{\lambda _n } (u _{\lambda _n } (T)  )
-\varphi _{\lambda _n } (u _0 )
+ \frac{a}{q}  \| u _{\lambda _n } (T) \| ^q 
- \frac{a}{q}  \| u _0 \| ^q  \right . \\
& \hspace{4cm} \left .  
+  k (u _{\lambda _n} (T))
-  k (u _0)
-\int_{0}^{T} 
 u ' _{\lambda _n } (t) \cdot f(t)  dt  
\RC  \\
\geq ~ 
& 
 \liminf _{n \to \infty }\varphi _{\lambda _n } (u _{\lambda _n } (T)  )
-\varphi _{G,p } (u _0 )
+ \frac{a}{q}  \| u  (T) \| ^q 
- \frac{a}{q}  \| u _0 \| ^q \\
& \hspace{4cm} 
+  k (u  (T))
-  k (u _0)
-\int_{0}^{T} 
 u ' (t) \cdot f(t)  dt  ,
\end{align*}
where we use \eqref{MY-01}. 
Let $J _{\lambda }$ by the resolvent of $\partial \varphi _{G,p }$. 
By \eqref{uniform004} and the definition of the Yosida approximation,
\begin{equation*}
\sup _{0 \leq t \leq T} \| J _{\lambda _n } u _{\lambda _n } (t) - u _{\lambda _n } (t) \|
\leq \lambda _n C . 
\end{equation*}
Hence 
\begin{align*}
& J _{\lambda _n }u _{\lambda _n } \to u  & \text{strongly in } C( [0,T] ; \R ^N ) .
\end{align*}
Moreover, 
by the continuity of $\varphi _{G ,p }  $ and  
\eqref{MY-01}, 
\begin{align*}
 \liminf _{n \to \infty }\varphi _{\lambda _n } (u _{\lambda _n } (T)  )
& \geq 
 \liminf _{n \to \infty }\varphi _{G, p } ( J _{\lambda _n }u _{\lambda _n } (T)  ) \\
& = 
 \varphi _{G, p } ( u (T)  ) .  
\end{align*}
Therefore 
\begin{align*}
&\liminf _{n \to \infty } \int_{0}^{T} \mu _{\lambda _n } (t) \cdot u' _{\lambda _n} (t) dt\\
\geq ~
& 
 \varphi _{G, p } ( u (T)  ) 
-\varphi _{G,p } (u _0 )
+ \frac{a}{q}  \| u  (T) \| ^q 
- \frac{a}{q}  \| u _0 \| ^q 
+  k (u  (T))
-  k (u _0)
-\int_{0}^{T} 
 u ' (t) \cdot f(t)  dt  \\
= ~ 
&
\int_{0}^{T}  
\LC
\frac{d}{dt}  \varphi _{G, p } ( u (t)  ) 
+ \frac{a}{q} \frac{d}{dt}   \| u  (t) \| ^q 
+  \frac{d}{dt}  k (u  (t))
-
 u ' (t) \cdot f(t) \RC dt  \\
= ~
&
\int_{0}^{T}  
u ' (t) 
\cdot 
\LC
\xi (t) 
+a \| u(t) \| ^{q-2} u(t) 
+\pi (u (t))
- f(t) \RC dt  
=
\int_{0}^{T}  
u ' (t) 
\cdot 
\mu (t) dt . 
\end{align*}
This implies that 
\begin{equation}
\begin{split}
&\limsup _{n \to \infty } \int_{0}^{T} J_{\lambda _n }\mu _{\lambda _n } (t) \cdot u '_{\lambda _n} (t) dt
\geq 
\liminf _{n \to \infty } \int_{0}^{T} J_{\lambda _n }\mu _{\lambda _n } (t) \cdot u' _{\lambda _n} (t) dt
\\
= ~
& 
\liminf _{n \to \infty } 
\LC
\int_{0}^{T} \mu _{\lambda _n } (t) \cdot u' _{\lambda _n} (t) dt
+
\int_{0}^{T} ( J_{\lambda _n }\mu _{\lambda _n } (t)
- \mu _{\lambda _n } (t) )  \cdot u' _{\lambda _n} (t) dt
\RC
 \\
\geq ~
&
\liminf _{n \to \infty } 
\LC
\int_{0}^{T} \mu _{\lambda _n } (t) \cdot u ' _{\lambda _n} (t) dt
- \lambda _n 
\int_{0}^{T} \| \partial \varphi _{\lambda _n } (\mu _{\lambda _n } (t))\|
 \|  u' _{\lambda _n} (t) \|  dt
\RC
 \\
= ~
&
\int_{0}^{T}  
u ' (t) 
\cdot 
\mu (t) dt . 
\end{split}
\label{convergence000}
\end{equation}
On the other hand, by \eqref{uniform004} and the definition of the Yosida approximation,
\begin{equation*}
\sup _{ 0 \leq t \leq T} \| J_{\lambda _n }\mu _{\lambda _n } (t) -
\mu _{\lambda _n } (t) \|
= 
\lambda _n \sup _{ 0 \leq t \leq T} \| \partial \varphi _{\lambda _n } ( \mu _{\lambda _n } (t) )
 \| \to 0 
\end{equation*}
as $n \to \infty $, which yields 
\begin{align*}
& J _{\lambda _n } \mu  _{\lambda _n } \rightharpoonup  \mu   & \ast\text{-weakly in } L ^{\infty}( 0,T ; \R ^N ) .
\end{align*}
Then by \eqref{convergence000}, 
\begin{align*}
\limsup _{n \to \infty } \int_{0}^{T} J_{\lambda _n }\mu _{\lambda _n } (t) \cdot 
\partial \varphi _{\lambda _n } ( \mu _{\lambda _n} (t) ) dt
& =
\limsup _{n \to \infty } \int_{0}^{T} J_{\lambda _n }\mu _{\lambda _n } (t) \cdot 
( - u' _{\lambda _n} (t) ) dt \\
& =
- \liminf _{n \to \infty } \int_{0}^{T} J_{\lambda _n }\mu _{\lambda _n } (t) \cdot 
 u' _{\lambda _n} (t)  dt \\
& \leq 
- \int_{0}^{T} \mu  (t) \cdot 
 u'  (t)  dt \\
& \leq 
 \int_{0}^{T} \mu  (t) \cdot 
 \zeta  (t)  dt . 
\end{align*}
Since 
$\partial \varphi _{\lambda _n } ( \mu _{\lambda _n} (t) ) 
\in \partial \varphi _{G,p  } (J _{\lambda _n } \mu _{\lambda _n} (t) )$
for a.e. $t \in [0 , T]$, 
Lemma \ref{wwlimsup} implies that 
\begin{equation*}
\zeta (t) \in \partial \varphi _{G,p } (\mu (t)) ~~~~~\text{a.e. } t\in [0,T] , 
\end{equation*}
whence follows Theorem \ref{TH01}. 
\end{proof}

\section{Case of  $\tau > 0 $}

The aim of this section is to consider the case of $\tau >0 $
and show the following result:
\begin{Th}
\label{Th2}
For any $\tau > 0 $, $u _ 0 \in \R ^N $,  and $ f \in L ^2 ( 0, T ; \R ^N )$, 
\eqref{Eq} possesses at least one solution satisfying 
\begin{equation*}
u \in W ^{1,2 } (0,T ; \R ^N ), ~~~~~\mu \in L ^{2} (0,T ; \R ^N ). 
\end{equation*}
\end{Th}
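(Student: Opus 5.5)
We follow the scheme of Theorem \ref{TH01}: first solve the regularized system
\begin{equation*}
u'(t) + \partial\varphi_\lambda(\mu(t)) = 0, \qquad
\mu(t) = \tau u'(t) + \partial\varphi_\lambda(u(t)) + a\|u(t)\|^{q-2}u(t) + \pi(u(t)) - f(t),
\end{equation*}
then pass to the limit $\lambda\to0$ using estimates independent of $\lambda$. Substituting the first equation into the second gives the implicit relation
\begin{equation*}
\mu + \tau\,\partial\varphi_\lambda(\mu) = g(t,u), \qquad g(t,u) := \partial\varphi_\lambda(u) + a\|u\|^{q-2}u + \pi(u) - f(t).
\end{equation*}
Since $\partial\varphi_\lambda$ is monotone and Lipschitz, the map $\id + \tau\,\partial\varphi_\lambda$ is invertible with a $1$-Lipschitz inverse. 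Indeed, it equals the resolvent $(\id + \tau\,\partial\varphi_\lambda)^{-1}$ of the maximal monotone operator $\partial\varphi_\lambda$. So $\mu = (\id+\tau\partial\varphi_\lambda)^{-1} g(t,u)$, and the system reduces to the ODE
\begin{equation*}
u' = -\partial\varphi_\lambda\bigl((\id+\tau\partial\varphi_\lambda)^{-1} g(t,u)\bigr).
\end{equation*}
Its right-hand side is locally Lipschitz in $u$ and $L^2$ in $t$. Hence Carath\'eodory theory, or a contraction on $C([0,T'];\R^N)$ as in Theorem \ref{TH01}, gives a local solution $u_\lambda\in W^{1,2}$ and $\mu_\lambda\in L^2$.

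Next come the a priori estimates. Multiplying the first equation by $\mu_\lambda$ and the second by $u'_\lambda$, and using \eqref{Chain}, yields
\begin{equation*}
\tau\|u'_\lambda\|^2 + \frac{d}{dt}\Bigl(\varphi_\lambda(u_\lambda) + \frac{a}{q}\|u_\lambda\|^q + k(u_\lambda)\Bigr) = -\partial\varphi_\lambda(\mu_\lambda)\cdot\mu_\lambda + f\cdot u'_\lambda \le f\cdot u'_\lambda .
\end{equation*}
Since $f$ is only in $L^2$, we absorb $f\cdot u'_\lambda \le \frac{\tau}{2}\|u'_\lambda\|^2 + \frac{1}{2\tau}\|f\|^2$; this is exactly where $\tau>0$ is used. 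Together with \eqref{est-of-k} and $q>2$, this gives bounds on $\sup_t\|u_\lambda\|$ and $\|u'_\lambda\|_{L^2}$, and $\int_0^T \partial\varphi_\lambda(\mu_\lambda)\cdot\mu_\lambda\,dt \le C$. These bounds yield global existence. Then $\partial\varphi_\lambda(\mu_\lambda) = -u'_\lambda$ is bounded in $L^2$, and $\partial\varphi_\lambda(u_\lambda)$ is bounded in $L^\infty$ by \eqref{bounded} and \eqref{MY-02}. From the second equation we get $\mu_\lambda = \tau u'_\lambda + (\text{bounded}) - f$, which is bounded in $L^2$.

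Then we pass to the limit. Along a subsequence, $u_{\lambda_n}\to u$ in $C([0,T];\R^N)$, $u'_{\lambda_n}\rightharpoonup u'$ in $L^2$, $\mu_{\lambda_n}\rightharpoonup\mu$ in $L^2$, $\partial\varphi_{\lambda_n}(u_{\lambda_n})\rightharpoonup\xi$, and $\partial\varphi_{\lambda_n}(\mu_{\lambda_n})\rightharpoonup\zeta=-u'$. As in Theorem \ref{TH01}, Lemma \ref{demiclosedness} gives $\xi\in\partial\varphi_{G,p}(u)$, because $J_{\lambda_n}u_{\lambda_n}\to u$ uniformly. The limit system then holds with $\zeta$ in place of $\mathcal L_{G,p}(\mu)$.

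The main obstacle is identifying $\zeta\in\partial\varphi_{G,p}(\mu)$. We apply Lemma \ref{wwlimsup} to $J_{\lambda_n}\mu_{\lambda_n}$, whose distance to $\mu_{\lambda_n}$ is $\lambda_n\|\partial\varphi_{\lambda_n}(\mu_{\lambda_n})\|\to0$ in $L^2$. It suffices to show
\begin{equation*}
\liminf_n\int_0^T\mu_{\lambda_n}\cdot u'_{\lambda_n}\,dt \ge \int_0^T\mu\cdot u'\,dt .
\end{equation*}
The new term $\tau\int\|u'_{\lambda_n}\|^2$ now appears, and weak lower semicontinuity of the norm gives precisely $\liminf \tau\int\|u'_{\lambda_n}\|^2 \ge \tau\int\|u'\|^2$. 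The remaining terms are handled exactly as before, via the energy identity, \eqref{MY-01}, continuity of $\varphi_{G,p}$, and strong convergence of $u_{\lambda_n}(T)$. The $f$-term converges because $u'_{\lambda_n}\rightharpoonup u'$ in $L^2$ and $f\in L^2$. The limit energy identity $\int\mu\cdot u' = \tau\int\|u'\|^2 + [\varphi_{G,p}(u)+\tfrac aq\|u\|^q+k(u)]_0^T - \int f\cdot u'$ follows from \eqref{Chain} with $\xi$. This closes the argument as in \eqref{convergence000}. The correction term $\int (J_{\lambda_n}\mu_{\lambda_n}-\mu_{\lambda_n})\cdot u'_{\lambda_n}$ is bounded by $\lambda_n\|u'_{\lambda_n}\|_{L^2}^2\to0$.
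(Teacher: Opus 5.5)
Your proof is correct. The $\lambda\to0$ half is essentially the paper's argument: the $\lambda$-independent energy bound via Young's inequality with $\tau>0$, demiclosedness for $\xi$ through $J_{\lambda_n}u_{\lambda_n}\to u$, and Lemma~\ref{wwlimsup} applied to $J_{\lambda_n}\mu_{\lambda_n}$.

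Where you genuinely differ is in solving the regularized problem \eqref{EqLam_tau}, and your route is much shorter. The paper's Theorem~\ref{Th3} treats $\tau u'$ as a perturbation. It uses a contraction valid only for $\tau\le\lambda/4$ (cf.\ \eqref{tau-existence}), approximates $f\in L^2$ by bounded data, proves uniqueness separately, and then runs an open--closed continuation argument in the parameter $\tau$. You instead eliminate $\mu$ using the globally defined, nonexpansive resolvent $(\id+\tau\partial\varphi_\lambda)^{-1}$ of the monotone Lipschitz map $\partial\varphi_\lambda$. This turns \eqref{EqLam_tau} into an explicit ODE $u'=F(t,u)$. Here $F$ is locally Lipschitz in $u$ uniformly in $t$, since $f$ cancels in differences, and is bounded by $C(1+\|f(t)\|)$ on bounded sets. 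Carath\'eodory theory plus your energy bound then gives global existence and uniqueness for every $\tau>0$ at once. You need no smallness condition, no continuation in $\tau$, and no approximation of $f$. The paper's perturbative route never uses the invertibility of $\id+\tau\partial\varphi_\lambda$, but since that structure is available, your reduction is the more efficient one.

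In the identification step you also correctly keep $\tau\int_0^T\|u'_{\lambda_n}\|^2dt$ and handle it by weak lower semicontinuity. The displayed computation in the paper's proof of Theorem~\ref{Th2} appears to be carried over from the case $\tau=0$. It drops this term both in the energy identity and in the final equality with $\int_0^T\mu\cdot u'\,dt$. Your version is the one that is correct as written.
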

To this end, we first deal with the following equations 
obtained by replacing the hypergraph Laplacian $\mathcal{L} _{G,p }$
 with its Yosida approximation $\partial \varphi _{\lambda }$: 
\begin{equation}
\begin{cases}
~~ u' (t) + \partial \varphi _{\lambda } (\mu (t) ) = 0 
			~~&~~t \in (0,T) , \\
~~\mu (t) = \tau u' (t) + \partial \varphi _{\lambda } (u(t)) + a \| u(t) \| ^{q-2} u(t)
			+ \pi (u(t)) - f(t) ~~&~~ t \in (0,T) , \\
~~u(0) = u_0 ,
\end{cases}
\label{EqLam_tau}
\tag{P$_  { \lambda , \tau }$}
\end{equation}
and prove the following:
\begin{Th}
\label{Th3} 
For any $ \tau > 0 $, $\lambda >0 $,  $u _ 0 \in \R ^N $, and $f \in L ^2 (0,T ;\R ^N )$, 
\eqref{EqLam_tau} possesses a unique solution satisfying 
\begin{equation*}
u \in W ^{1,2 } (0,T ; \R ^N ) ,  ~~~\mu \in L^2 (0,T ; \R ^N ) . 
\end{equation*}
\end{Th}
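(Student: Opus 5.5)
Since $\partial\varphi_\lambda$ is single-valued and Lipschitz with constant $1/\lambda$ (by \eqref{Lipschitz}), I will eliminate $\mu$ and turn \eqref{EqLam_tau} into an implicit ODE for $u$. Set $g(t):=\partial\varphi_\lambda(u)+a\|u\|^{q-2}u+\pi(u)-f(t)$. Then the system reads
\begin{equation*}
u'+\partial\varphi_\lambda(\tau u'+g(t,u))=0 .
\end{equation*}
For fixed $y\in\R^N$, consider the map $v\mapsto v+\partial\varphi_\lambda(\tau v+y)$. It is strongly monotone (constant $1$), because $v\mapsto\partial\varphi_\lambda(\tau v+y)$ is monotone by Lemma \ref{monotonicity}. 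It is also Lipschitz. Hence it is a bijection of $\R^N$. Its inverse $R_y$ satisfies $\|R_y(z_1)-R_{y}(z_2)\|\le\|z_1-z_2\|$.

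Next I control the dependence on $y$. Let $v_i$ solve $v_i+\partial\varphi_\lambda(\tau v_i+y_i)=0$ for $i=1,2$. Testing the difference with $\tau(v_1-v_2)+(y_1-y_2)$ and using monotonicity gives
\begin{equation*}
\tau\|v_1-v_2\|^2\le\|v_1-v_2\|\,\|y_1-y_2\|,
\end{equation*}
so $\|v_1-v_2\|\le\tau^{-1}\|y_1-y_2\|$. Thus $u'=F(t,u):=V(g(t,u))$, where $V$ is $\tau^{-1}$-Lipschitz. Here $g$ is locally Lipschitz in $u$, with a constant that depends on $\lambda$, $q$, $L$ and $\|u\|$, and $g$ is $L^2$ in $t$. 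Since $V(0)=0$, we get $\|F(t,u)\|\le\tau^{-1}\|g(t,u)\|$. By Carathéodory theory, or the contraction argument of Theorem \ref{TH01} run in $C([0,T'];\R^N)$ with $\|f\|\in L^1$, there is a unique local solution $u\in W^{1,2}(0,T';\R^N)$. We then recover $\mu:=\tau u'+g$. Uniqueness follows from the local Lipschitz property by Gronwall.

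To extend the solution to $[0,T]$, I need an a priori bound on $\sup\|u\|$. I test the first equation by $\mu$ and the second by $u'$, exactly as in Section 3. This gives
\begin{equation*}
\tau\|u'\|^2+\frac{d}{dt}\Big(\varphi_\lambda(u)+\frac aq\|u\|^q+k(u)\Big)=u'\cdot\mu-\partial\varphi_\lambda(\mu)\cdot\mu+f\cdot u'\le f\cdot u' .
\end{equation*}
Here I used $u'\cdot\mu=-\partial\varphi_\lambda(\mu)\cdot\mu\le0$ and \eqref{Chain}. Since $f\cdot u'\le\frac\tau2\|u'\|^2+\frac1{2\tau}\|f\|^2$, integrating yields
\begin{equation*}
\frac\tau2\int_0^t\|u'\|^2+\frac aq\|u(t)\|^q+k(u(t))\le C+\frac1{2\tau}\|f\|_{L^2}^2 .
\end{equation*}
Now \eqref{est-of-k} together with $q>2$ bounds $\sup\|u\|$ and $\|u'\|_{L^2}$ independently of $T'$. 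Then $\mu=\tau u'+g\in L^2$. The local solution therefore extends to all of $[0,T]$.

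The main obstacle is the solvability of the implicit relation for $u'$, since $\tau u'$ sits inside the nonlinear $\partial\varphi_\lambda$. The strong monotonicity argument above handles it. The only remaining point is that $f$ is merely $L^2$, so the ODE has to be treated in the Carathéodory sense (integral form) rather than classically.
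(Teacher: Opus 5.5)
Your proof is correct, and it takes a genuinely different and much shorter route than the paper.

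The paper keeps $\mu$ as the unknown and reruns the fixed-point map $\mu\mapsto u_\mu\mapsto\mu_u$ of Section 3 in $L^\infty(0,T';\R^N)$. There $\tau u'$ enters only as a perturbation costing $\frac{\tau}{\lambda}\sup\|\mu_1-\mu_2\|$, so local existence is obtained only for $\tau$ small relative to $\lambda$ ($\tau\le\lambda/4$) and for bounded $f$. The paper then needs several further steps:
\begin{itemize}
\item an $L^\infty$-approximation of $f\in L^2$, with a limsup/maximal-monotonicity identification of the limit;
\item a separate energy-difference Gronwall argument for uniqueness;
\item an open--closed continuation argument on the set $\mathcal{M}$ of admissible $\tau_0$ to reach every $\tau>0$. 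Openness comes from a second contraction $h\mapsto\delta u_h'$, and closedness from compactness.
\end{itemize}

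You instead absorb $\tau u'$ into the monotone structure. Strong monotonicity of $v\mapsto v+\partial\varphi_\lambda(\tau v+y)$ turns the system into an explicit Carath\'eodory ODE $u'=V(g(t,u))$ with $V$ globally Lipschitz. This gives local existence for every $\tau>0$ and every $f\in L^2$ at once. Uniqueness follows by plain Gronwall, because any solution pair must satisfy $u'=V(g(t,u))$ a.e. by injectivity of that map. Global existence then follows from the same energy estimate the paper uses.

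What the paper's route buys is mainly reuse of the Section 3 machinery, plus, as a by-product of the closedness step, stability of solutions along $\tau_m\to\tau_0$. Yours removes the restriction $\tau\le\lambda/4$, the approximation of $f$ and the whole continuation argument. It also shows the viscous case is no harder than $\tau=0$, contrary to the paper's remark.

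You can even identify $V$ explicitly. With $w=\tau v+y$ the relation reads $w+\tau\partial\varphi_\lambda(w)=y$, so $V=-(\partial\varphi_\lambda)_\tau=-\partial\varphi_{\lambda+\tau}$ by the resolvent identity. Its Lipschitz constant $(\lambda+\tau)^{-1}$ is uniform down to $\tau=0$.

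One small slip: in your energy identity the middle expression should be $u'\cdot\mu+f\cdot u'$, without the extra $-\partial\varphi_\lambda(\mu)\cdot\mu$. The resulting inequality $\le f\cdot u'$ is correct.
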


\begin{proof}[Proof of Theorem \ref{Th3}]
Define a mapping 
$ \mathscr{T} : L ^{\infty } (0, T' ; \R ^ N )
\to  L ^{\infty } (0, T' ; \R ^ N )$
in the following way: 
\begin{itemize}
\item[i)] Fix $ \mu \in  L ^{\infty } (0, T' ; \R ^ N )$ and define 
$u _{\mu } \in W ^{1, \infty } (0, T' ; \R ^ N )$ by 
\begin{equation*}
\begin{cases*}
~~ u' _{\mu } (t) = -  \partial \varphi _{\lambda } (\mu (t) ), \\
~~ u_{\mu}(0) = u_0. 
\end{cases*}
\end{equation*}

\item[ii)] By using $u _ {\mu } $ in i), we 
define $\mu _u \in L ^{\infty } (0, T' ; \R ^ N )$
by 
\begin{equation*}
\mu _ u  (t) = \tau u'_{\mu} (t) + \partial \varphi _{\lambda } (u _{\mu }(t))
		+ a \| u_{\mu }(t) \| ^{q-2} u_{\mu }(t)
			+ \pi (u_{\mu } (t)) - f(t) .
\end{equation*}
To define $\mu _u \in L ^{\infty } (0, T' ; \R ^ N )$, 
we here assume $ f \in L ^{\infty } (0, T' ; \R ^ N ) $. 

\item[iii)] Define 
$ \mathscr{T} : L ^{\infty } (0, T' ; \R ^ N )
\to  L ^{\infty } (0, T' ; \R ^ N )$ by 
$ \mathscr{T} (\mu) :=  \mu _ u $.

\end{itemize}
We define $K _M $ by \eqref{KM}. 
When $\mu \in K_M $, we get by the first equation of \eqref{EqLam_tau}
\begin{equation*}
\sup _{0 \leq t \leq T' } 
\| u ' _{\mu } (t) \|
\leq
 \sup _{0 \leq t \leq T' } 
 \|  (\partial \varphi _{G,p }  ) ^{\circ}(\mu (t))  \|
 \leq
\kappa  \sup _{0 \leq t \leq T' } 
 \| \mu (t) \| ^{p-1}
 \leq 
\kappa  M ^{p-1} , 
\end{equation*}
where we used \eqref{bounded} and \eqref{MY-02}.
Then 
\begin{equation*}
\sup _{0 \leq t \leq T' } \| u _{\mu} (t) \|
\leq \| u_0 \| + \kappa T' M ^{p-1}  . 
\end{equation*}
By the second equation of \eqref{EqLam_tau}, 
\begin{align*}
\| \mu _u (t) \| 
&\leq \tau \| u' _{\mu} (t)\| + \| \partial \varphi _{\lambda } (u _{\mu } )\|
+a \| u_{\mu} (t) \| ^{q-1} + \| \pi (u _{\mu} (t) )\| + \| f (t) \| \\
&\leq \tau \kappa  M ^{p-1} 
+\kappa ( \| u_0 \| + \kappa T' M ^{p-1} ) ^{p-1}
+a ( \| u_0 \| + \kappa T' M ^{p-1} ) ^{q-1} \\
& \hspace{3cm} +L ( \| u_0 \| + \kappa T' M ^{p-1} )
+ \| \pi (0) \| + \| f (t) \| .
\end{align*}
Hence by fixing 
\begin{align*}
M& := 
 1
 +\kappa ( \| u_0 \| + 1 ) ^{p-1}
+a ( \| u_0 \| + 1 ) ^{q-1} + L ( \| u_0 \| + 1 )
+ \| \pi (0) \| + \sup _{0\leq t \leq T } \| f (t) \| ,\\
T' & \leq 1/ \kappa M ^{p-1} , \\
\tau & \leq  1/ \kappa  M ^{p-1} , 
\end{align*}
we obtain 
\begin{equation*}
\sup _{0\leq t \leq T'} \| \mu _u (t) \| 
\leq M 
\end{equation*}
and $  \mathscr{T} (  K_M  ) \subset K_M $. 
We also have 
\begin{equation*}
\sup _{0 \leq t \leq T' } \| u _{\mu} (t) \|
\leq \| u_0 \| + 1
\end{equation*}
for any $ \mu \in K _M  $.

Let $ \mu _1 , \mu _2 \in K _M $ and $ u ' _ {\mu _i } =  - \partial \varphi _{\lambda }  (\mu _i )$ ($i=1,2 $).  
By the first  equation of  \eqref{EqLam_tau}, 
\begin{equation}
\sup _{ 0\leq t \leq T' }\|  u ' _ {\mu _1 }  (t)- u ' _ {\mu _2 } (t)\|  =
\sup _{ 0\leq t \leq T' } \| \partial \varphi _{\lambda }  (\mu _1 (t)) - \partial \varphi _{\lambda }  (\mu _2 (t))  \|
\leq 
\frac{1}{\lambda }  \sup _{ 0\leq t \leq T' } \| \mu _1  (t)- \mu _2  (t) \|
\label{est-difference-tau00}
\end{equation}
which yields
\begin{align}
\sup _{ 0\leq t \leq T' } \| u  _{ \mu _1 } (t) - u _{\mu _2 } (t) \|
\leq \frac{T' }{\lambda } \sup _{ 0\leq t \leq T' } \| \mu _1 (t) - \mu _ 2 (t) \|  .
\label{est-difference-tau01}
\end{align}
By the second equation of \eqref{EqLam_tau},
$\mu _ { u _i } =  \mathscr{T} ( \mu _i ) $ ($i=1,2 $) satisfy 
\begin{align*}
\| \mu _{u_1} (t) - \mu _{u_2} (t) \|
\leq &
\tau \|  u' _{\mu _1 } (t) - u ' _{\mu _2 } (t) \| 
+ \frac{1}{\lambda } \| u_{\mu _1 } (t) - u_{\mu _2 } (t) \| 
\\
& +a \LN \| u_{\mu _1} (t) \| ^{q-2} u_{\mu _1}(t) -\| u_{\mu _2} (t) \| ^{q-2} u_{\mu _2} (t) \RN 
+
L \| u_{\mu _1} (t) -u_{\mu _2} (t) \| , 
\end{align*}
where $L $ is a Lipschitz constant of $\pi : \R ^ N \to \R ^N $
and we use \eqref{Lipschitz}. 
By  the Taylor expansion, there exists some constant $ \ell (\| u_0 \|)$ depending only on $\| u_0 \|$
such that 
\begin{align*}
\LN \| u_{\mu _1} (t) \| ^{q-2} u_{\mu _1} (t) -\| u_{\mu _2}(t)  \| ^{q-2} u_{\mu _2} (t) \RN 
 \leq \ell (\| u_0 \|)
\LN u_{\mu _1} (t) - u_{\mu _2} (t)  \RN .
\end{align*}
Hence by \eqref{est-difference-tau00} and \eqref{est-difference-tau01}, 
\begin{align*}
& \sup _{0 \leq t \leq T' } \| \mu _{u_1} (t) - \mu _{u_2} (t) \| \\
\leq ~ & 
 \tau  \sup _{0 \leq t \leq T' }  \| u' _{\mu _1 } (t) - u'_{\mu _2 } (t) \|
+
\frac{1}{\lambda } \sup _{0 \leq t \leq T' }  \| u_{\mu _1 } (t) - u_{\mu _2 } (t) \|
 \\
&\hspace{1cm} +a\ell (\| u_0  \|) \sup _{0 \leq t \leq T' } \LN u_{\mu _1} (t) - u_{\mu _2} (t)  \RN
+
L \sup _{0 \leq t \leq T' } \| u_{\mu _1} (t) -u_{\mu _2} (t) \| \\
\leq ~&
\frac{\tau }{\lambda }  \sup _{0 \leq t \leq T' }  \| \mu _1  (t) - \mu _2  (t) \|
+
\frac{T'}{\lambda ^2 } \sup _{0 \leq t \leq T' }  \| \mu _1  (t) - \mu _2  (t) \|
 \\ 
&\hspace{1cm} +
\frac{aT'\ell (\| u_0  \|)}{\lambda }  \sup _{0 \leq t \leq T' } \| \mu _1 (t)-\mu _2 (t)  \|
+
\frac{LT'}{\lambda } \sup _{0 \leq t \leq T' } \| \mu _1 (t) -\mu _2 (t) \|  .
\end{align*}
Now by fixing $\tau > 0 $ and $T '> 0$ by  
\begin{equation}
T':= 
\min \LD 
\frac{1}{2} 
\LC
\frac{1}{\lambda ^2 }
+
\frac{a\ell (\| u_0  \|)}{\lambda }
+
\frac{L}{\lambda } \RC ^{-1} , 
\frac{1 }{	\kappa M ^{p-1}  } 
\RD , 
~~~
\tau \leq \frac{1}{4} \lambda  ,
\label{tau-existence}
\end{equation}
we obtain 
\begin{equation*}
\sup _{0 \leq t \leq T' } \| \mu _{u_1} (t) - \mu _{u_2} (t) \|
\leq 
\frac{3}{4}  \sup _{0 \leq t \leq T' } \| \mu _1 (t) -\mu _2 (t) \| , 
\end{equation*}
which implies that $ \mathscr{T}  $ is a contraction map on $K_M $
and \eqref{EqLam} has a time local solution between $(0,T' )$
for any $f \in L ^{\infty } (0,T' ; \R ^N )$.

According to \eqref{tau-existence}, we remark that $\tau $ depends on $\lambda $.
Then if we simply let $\lambda \to 0$, then $\tau \to 0$, and the term $\tau u'$ disappears.
Therefore, we need to show that \eqref{EqLam_tau} has a solution for any $\tau > 0$ independent of $\lambda $.

To this end, we here extend the external force $f $ to $L ^2 (0,T ; \R ^N )$.
Let $\{ f _ n \} _{n\in \N } $ be $f _n \in L ^{\infty } (0,T ; \R ^N )$ and 
$f _n \to f $ strongly in $L ^2 (0,T ; \R ^N )$. 
Furthermore, let $ (u _ n , \mu _n )$ be a solution to 
\begin{equation}
\begin{cases}
~~ u' _n (t) + \partial \varphi _{\lambda } (\mu _n (t) ) = 0 
			, \\
~~\mu _n (t) = \tau u' _n  (t) + \partial \varphi _{\lambda } (u_n (t)) + a \| u _n  (t) \| ^{q-2} u_n  (t)
			+ \pi (u_n  (t)) - f_n  (t) , \\
~~u _n  (0) = u_0 . 
\end{cases}
\label{EqLam_n}
\tag{P$_  { n }$}
\end{equation}
Multiplying the first equation of \eqref{EqLam_n} by $\mu _n (t) $
and the second equation by $ u ' _n (t)$, we have 
\begin{align*}
 u ' _n (t) \cdot \mu _n (t) &= - \partial \varphi _{\lambda } (\mu _n (t) ) \cdot \mu _n (t) 
\leq  0 , \\
 \mu _n \cdot u ' _n (t) &= \tau \|  u' _n (t)\| ^2  + \frac{d}{dt} \LC  \varphi _{\lambda } (u _n (t) ) + \frac{a}{q} \| u _ n(t)\| ^q + k (u _n (t))
 \RC  +u  ' _n (t )\cdot f_ n (t) \\
 &\geq  \frac{ \tau}{2 } \|  u' _n (t)\| ^2 - \frac{1}{2 \tau }  \| f _ n (t)\| ^2
		+ \frac{d}{dt} \LC  \varphi _{\lambda } (u _n (t) ) + \frac{a}{q} \| u _ n(t)\| ^q + k (u _n (t)) \RC ,
\end{align*} 
which yields 
\begin{equation*}
 \frac{ \tau}{2 } \|  u' _n (t)\| ^2  
		+ \frac{d}{dt} \LC  \varphi _{\lambda } (u _n (t) ) + \frac{a}{q} \| u _ n(t)\| ^q + k (u _n (t)) \RC
		\leq \frac{1}{2 \tau }  \| f_ n (t)\| ^2 . 
\end{equation*}
Integrating this, we get 
\begin{align*}
& \frac{ \tau}{2 } \int_{0}^{T}  \|  u' _n (t)\| ^2 dt   
		+  \LC  \varphi _{\lambda } (u _n (T) ) + \frac{a}{q} \| u _ n(T )\| ^q + k (u _n (T)) \RC \\
	&	\leq \frac{1}{2 \tau } \int_{0}^{T}  \| f_ n (t)\| ^2 
		+ \LC  \varphi _{\lambda } (u _0  ) + \frac{a}{q} \| u _0 \| ^q + k (u _0 ) \RC . 
\end{align*}
By  $\varphi _\lambda \geq 0 $,  \eqref{est-of-k}, and $q > 2 $, 
we obtain 
\begin{equation}
\int_{0}^{T}  \|  u' _n (t)\| ^2 dt    \leq C_2 , 
\label{est-f-L2-001}
\end{equation}
where and henceforth $C_2 > 0 $ is a general constant independent of $n \in \N$. 
Hence 
\begin{equation}
\sup _{0 \leq t \leq T }  \|  u _n (t)\| ^2 dt    \leq C_2 
\label{est-f-L2-002}
\end{equation}
and from the second equation of \eqref{EqLam_n}
\begin{equation}
\int_{0}^{T}   \|  \mu _n (t)\| ^2 dt    \leq C_2 .
\label{est-f-L2-003}
\end{equation}
The estimates \eqref{est-f-L2-001}--\eqref{est-f-L2-003}
imply that the solution to \eqref{EqLam_n} dose not blow-up 
and can be extended globally up to $[0,T ]$.

By a priori estimates \eqref{est-f-L2-001}--\eqref{est-f-L2-003},
there exist some subsequence of $ \{ u _n  \} _ {n \in \N } ,  \{ \mu _n  \} _ {n \in \N } $ (we omit relabeling)
and the limits $u , \mu  : [0,T ] \to \R ^N $ such that 
\begin{align*}
& u _n \to u  & \text{strongly in } C( [0,T] ; \R ^N ) ,\\
& u' _n \rightharpoonup  u'  & \text{weakly in } L ^{2}
			( 0,T; \R ^N ) , \\
& \mu _ n  \rightharpoonup  \mu  & \text{weakly in } L ^{2 }
			( 0,T ; \R ^N ) .
\end{align*}
Moreover, from 
\eqref{est-f-L2-002} and \eqref{bounded}, 
\begin{equation*}
\sup _{0 \leq t \leq T }   \|  \partial \varphi _{\lambda } (u _ n (t) )\| \leq C_2  
\end{equation*}
and from \eqref{est-f-L2-002} and the first equation of \eqref{EqLam_n}, 
\begin{equation*}
\int_{0}^{T}    \|  \partial \varphi _{\lambda } (\mu _ n (t) )\| ^2 dt  \leq C_2  .
\end{equation*}
Hence there are some $\xi , \zeta : [0,T ] \to \R ^N $ satisfying 
\begin{align*}
& \partial \varphi _{\lambda } ( u  _n (t) ) \rightharpoonup \xi
		& \ast \text{-weakly in } L ^{\infty }
			( 0,T ; \R ^N ) , \\
& \partial \varphi _{\lambda _n } ( \mu  _n (t) ) \rightharpoonup  \zeta
		& \text{weakly in } L ^{2 }
			( 0,T ; \R ^N ) .
\end{align*}
By the demiclosedness of subdifferentials (Lemma \ref{demiclosedness}), 
we get $\xi (t) = \partial \varphi _{\lambda  } (u (t)) $ for a.e. $t\in [0,T]$.
Then $ u, \mu $ and $\zeta $ satisfy 
\begin{equation*}
\begin{cases}
~~ u' (t) + \zeta (t) = 0 ,\\
~~\mu (t) = \tau u' (t) +  \partial \varphi _{\lambda  } (u (t))  + a \| u(t) \| ^{q-2} u(t)
			+ \pi (u(t)) - f(t) .
\end{cases}
\end{equation*}
Therefore 
if $ \zeta (t) \in \partial \varphi _{ \lambda  } ( \mu (t)) $ for a.e. $t \in [0,T ]$ is valid, 
we can show that \eqref{EqLam_tau} has a solution for  $f\in L^2 (0,T ;\R ^N )$.

Multiplying the second equation of \eqref{EqLam_n} by $u ' _n $
and integrating over $[0,T ]$, we have 
\begin{align*}
& \liminf _{n \to \infty } \int_{0}^{T} u' _n (t) \cdot \mu _n (t) dt \\
  ={} &  \liminf _{n\to \infty } 
\LC  \tau \int_{0}^{T} \| u ' _n (t)\| ^2 dt + \varphi _{\lambda } (u _ n (T) ) -
\varphi _{\lambda } (u _0  )  \right .  \\
& \hspace{2cm} \left . 
 {} + {} 
\frac{a}{q} \LC \| u _n (T) \| ^q - \| u _0 \| ^q \RC + k (u _ n (T)) - k ( u_ 0 )
 - \int_{0}^{T} f_n (t ) \cdot u' _n (t)  dt   \RC \\
  \geq {} &
\tau \int_{0}^{T} \| u ' (t)\| ^2 dt + \varphi _{\lambda } (u  (T) ) -
\varphi _{\lambda } (u _0  )    \\
&
\hspace{2cm}  
 {} + {} 
\frac{a}{q} \LC \| u  (T) \| ^q - \| u _0 \| ^q \RC + k (u  (T)) - k ( u_ 0 )
 - \int_{0}^{T} f (t ) \cdot u'  (t)  dt   \\
= &  {} 
 \int_{0}^{T} \LC \tau \| u ' (t)\| ^2 + \frac{d}{dt} \varphi _{\lambda } (u (t))  
			+ \frac{a}{q} \frac{d}{dt}  \| u  (t) \| ^q  +  \frac{d}{dt}  k (u  (t)) 
			 - f(t) \cdot u' (t) \RC dt 
    \\
= & {}  
 \int_{0}^{T} \LC \tau u ' (t)+ \partial  \varphi _{\lambda } (u (t))  
			+ a\| u  (t) \| ^{q-2} u(t)  +  \pi (u  (t)) 
			 - f(t) \RC \cdot u' (t) dt 
    \\
= &   {}  
 \int_{0}^{T} \mu (t) \cdot u' (t) dt .
\end{align*}
Namely, 
\begin{equation*}
\limsup _{n \to \infty }  \int_{0}^{T}  \mu _n (t) \cdot (- u' _n (t) ) dt
\leq 
 \int_{0}^{T} \mu (t) \cdot ( - u' (t) )  dt
= 
 \int_{0}^{T} \mu (t) \cdot \zeta (t )  dt , 
\end{equation*}
which together with Lemma \ref{wwlimsup} yields
$\zeta (t) = \partial  \varphi _{\lambda } (\mu (t))$ for a.e. $t \in [0,T]$
and then 
\eqref{EqLam_tau} possesses a solution for  $f\in L^2 (0,T ;\R ^N )$.

We next show the uniqueness of solution. Let $(u _ i , \mu _ i )$ ($i=1,2 $) 
be solutions to 
\begin{equation*}
\begin{cases}
~~ u' _ i  (t) + \partial \varphi _{\lambda } (\mu _ i (t) ) = 0 
			~~&~~t \in (0,T) , \\
~~\mu _ i (t) = \tau u' _ i (t) + \partial \varphi _{\lambda } (u _ i (t)) + a \| u _ i (t) \| ^{q-2} u _ i (t)
			+ \pi (u _ i (t)) - f _ i (t) ~~&~~ t \in (0,T) , \\
~~u_ i (0) = u_{i0} .  
\end{cases}
\end{equation*}
Multiplying the first equation by $ \mu _ i $ and the second equation by $ u ' _i $, 
we get 
\begin{align*}
 \mu _i (t) \cdot u ' _ i (t) &= -  \partial \varphi _{\lambda } (\mu _ i (t) ) \cdot  \mu _i (t) \leq 0 ,  \\
  u ' _ i (t) \cdot \mu _i (t)  & \geq 
		\tau \| u' _ i (t)\| ^2 + \frac{d}{dt}  \varphi _{\lambda } (u _ i (t)) + \frac{a}{q}  \frac{d}{dt} \| u _ i (t) \| ^{q} 
			+ \frac{d}{dt} k  (u _ i (t)) - \| f _ i (t) \| \| u' _i (t ) \| \\ 
 & \geq 
		\frac{\tau }{2}  \| u' _ i (t)\| ^2 + \frac{d}{dt}  \varphi _{\lambda } (u _ i (t)) + \frac{a}{q}  \frac{d}{dt} \| u _ i (t) \| ^{q} 
			+ \frac{d}{dt} k  (u _ i (t)) -\frac{1}{2 \tau }  \| f _ i (t) \| ^2  ,
\end{align*}
which implies 
\begin{align*}
&\frac{\tau }{2} \int_{0}^{T}  \| u' _ i (t)\| ^2 dt + \varphi _{\lambda } (u _ i (T)) + \frac{a}{q} \| u _ i (T) \| ^{q} 
			+ k  (u _ i (T))  \\
& \hspace{1cm} \leq \frac{1}{2 \tau } \int_{0}^{T}   \| f _ i (t) \| ^2 dt 
			 +\varphi _{\lambda } (u _ {i0} ) + \frac{a}{q} \| u _ {i0}  \| ^{q} 
			+ k  (u _ {i0}  ) .
\end{align*}
Then there exists a constant $C (u_ {i0 }, f _ i )$ depending only on $u_ {i0 }$ and $f _ i $
such that 
\begin{equation*}
\int_{0}^{T}  \| u' _ i (t)\| ^2 dt  \leq C (u_ {i0 }, f _ i ) .
\end{equation*}
Immediately, 
\begin{equation}
\sup _{ 0\leq t \leq T } \| u _ i (t)\|  \leq C (u_ {i0 }, f _ i )
\label{uniquness0001}
\end{equation}
holds. 

Multiplying the difference of the first equations by $ \mu _ 1 - \mu _2 $
and multiplying 
the difference of the second  equations by $ u ' _1 - u' _2  $, 
we have 
\begin{align*}
 ( u ' _1 (t) - u ' _2 (t)  ) \cdot (\mu _1 (t) - \mu _2 (t) )
					&= - (\partial \varphi _{\lambda } ( \mu _1 (t)  )  - \partial \varphi _{\lambda } ( \mu _2 (t)  ) )	
							\cdot (\mu _1 (t) - \mu _2 (t) ) \leq 0 , \\ 
 (\mu _1 (t) - \mu _2 (t) ) \cdot  ( u ' _1 (t) - u ' _2 (t)  ) 
					& \geq \tau \| u' _1 (t)  - u' _2 (t)  \| ^2 - \frac{1}{\lambda } \| u_1 (t) - u_2 (t) \|  \| u' _1 (t)  - u' _2 (t)  \| \\
					 & - a \| \| u _1 (t) \| ^{q-2} u _1 (t)  - \| u _2 (t) \| ^{q-2} u _2 (t)   \|  \| u' _1 (t)  - u' _2 (t)  \| \\
					 &   - L \| u_1 (t) - u_2 (t) \|  \| u' _1 (t)  - u' _2 (t)  \| \\
					 &  -  \| f_1 (t) - f_2 (t) \|  \| u' _1 (t)  - u' _2 (t)  \|  .
\end{align*}
By the Taylor expansion and \eqref{uniquness0001}, 
there exists some constant $\ell (u _{10}, u _{2 0} , f_1 , f_2 ) $
depending only on $ u _{10}, u _{2 0} , f_1 , f_2 $ such that 
\begin{equation*}
\| \| u _1 (t) \| ^{q-2} u _1 (t)  - \| u _2 (t) \| ^{q-2} u _2 (t)   \| 
\leq 
\ell (u _{10}, u _{2 0} , f_1 , f_2 ) \|  u _1 (t) -  u _2 (t) \| . 
\end{equation*}
Hence we obtain 
\begin{align*}
\frac{\tau}{2}  \| u' _1 (t)  - u' _2 (t)  \| ^2
					& \leq  \frac{2}{\lambda \tau } \| u_1 (t) - u_2 (t) \| ^2
							+ \frac{2 a^2 \ell (u _{10}, u _{2 0} , f_1 , f_2 ) ^2 }{\tau }  \|  u _1 (t) -  u _2 (t) \| ^2  \\
					 & \hspace{1cm}   \frac{2 L ^2 }{\tau } \| u_1 (t) - u_2 (t) \| ^2  + \frac{2}{\tau }  \| f_1 (t) - f_2 (t) \| ^2 .
\end{align*}
By using this inequality, we deduce 
\begin{align*}
\frac{d}{dt} \| u _1 (t)  - u _2 (t)  \| ^2 &\leq 2 \| u _1 (t)  - u _2 (t) \|   \| u' _1 (t)  - u ' _2 (t) \| \\
		& \leq 
		 \frac{2}{\lambda ^ {1/2 } \tau } \| u_1 (t) - u_2 (t) \| ^2
							+ \frac{2 a \ell (u _{10}, u _{2 0} , f_1 , f_2 )  }{\tau }  \|  u _1 (t) -  u _2 (t) \| ^2  \\
					 & \hspace{1cm}   \frac{2 L }{\tau } \| u_1 (t) - u_2 (t) \| ^2  + \frac{2}{\tau }  \| f_1 (t) - f_2 (t) \|  \|  u _1 (t) -  u _2 (t) \| \\
		& \leq 
		\frac{2}{\tau } \LC 	\frac{1}{\lambda ^ {1/2 }  }  +   a \ell (u _{10}, u _{2 0} , f_1 , f_2 )  +  L + 1 \RC   \| u_1 (t) - u_2 (t) \| ^2 \\
		& \hspace{1cm}  + \frac{1}{2\tau }    \| f_1 (t) - f_2 (t) \| ^2  .
\end{align*}
With the aid of Gronwall's inequality, 
\begin{equation}
\sup _{0\leq t \leq T }   \| u_1 (t) - u_2 (t) \| ^2 
		 \leq  e ^{ C_3T} \| u_{10}  - u_{20 } \| ^2 
		+  \frac{e ^{C_3T} }{2\tau }   \int_{0}^{T}  \| f_1 (t) - f_2 (t) \| ^2 dt  ,	 
\label{est-diff-tau-001} 
\end{equation}
where 
\begin{equation*}
C_3 := \frac{2}{\tau } \LC 	\frac{1}{\lambda ^ {1/2 }  }  +   a \ell (u _{10}, u _{2 0} , f_1 , f_2 )  +  L + 1 \RC  .
\end{equation*}
This implies that the solution to \eqref{EqLam_tau} is unique.

Finally, we show that \eqref{EqLam_tau} has a solution for every $\tau > 0 $. 
Define 
\begin{equation*}
\mathcal{M} := \{ \tau _0  \in [0 , \infty ) ; ~~\text{\eqref{EqLam_tau} possesses a unique global solution for every } 0 \leq \tau \leq \tau _ 0 \} . 
\end{equation*}
By \eqref{tau-existence}, we can see that $ [ 0 , \lambda /  4  ] \subset \mathcal{M }$
and then $\mathcal{M }$ is not an empty set. 
We are going to show 
that $\mathcal{M}$ is open and closed in the relative topology induced by $\R $ on $[0, \infty )$, 
which implies $\mathcal{M} = [0, \infty ) $, in particular, \eqref{EqLam_tau} has a solution for arbitrary $\tau > 0 $.

We first show that $\mathcal{M}$ is closed. 
Let $\{ \tau _m \} \subset \mathcal{M}$ and $\tau _m \to \tau  _ 0 $ as $m \to \infty $.
If $ \tau _0 < \tau _ m $ with some $m \in \N $, 
then $\tau _0 \in \mathcal{M}$ immediately holds by the definition of $\mathcal{M}$.
Hence we can assume that $\tau _ m < \tau _0 $ for every $m \in \N $ without loss of generality.
Suppose that $ (u _ m , \mu _ m )  $ is a unique solution to 
\begin{equation}
\begin{cases}
~~ u' _m  (t) + \partial \varphi _{\lambda } (\mu _m  (t) ) = 0 
			 , \\
~~\mu _m  (t) = \tau _m  u' _m   (t) + \partial \varphi _{\lambda } (u_m  (t)) + a \| u _m   (t) \| ^{q-2} u_m   (t)
			+ \pi (u_m   (t)) - f  (t) , \\
~~u _m  (0) = u_0 ,  
\end{cases}
\label{EqLam_m}
\tag{P$_  { m }$}
\end{equation}
Multiplying the first equation of \eqref{EqLam_m} by $\mu _m $ and 
the second equation of \eqref{EqLam_m} by $u' _m $, we have 
\begin{align*}
u' _m  (t) \cdot \mu _m (t) & = -  \partial \varphi _{\lambda } (\mu _m  (t) ) \cdot \mu _m (t) \leq  0 , \\ 
 \mu _m (t) \cdot u' _m  (t) 
& = \tau _m \| u' _m  (t)\|  ^2 + \frac{d}{dt}  \varphi _{\lambda } (u_m  (t)) + \frac{a}{q}  \frac{d}{dt}  \| u _m   (t) \| ^{q}
			+ \frac{d}{dt} k (u_m   (t)) - f  (t) \cdot u' _m (t) \\
& \geq  \frac{\tau _m}{2}  \| u' _m  (t)\|  ^2 + \frac{d}{dt}  \varphi _{\lambda } (u_m  (t)) + \frac{a}{q}  \frac{d}{dt}  \| u _m   (t) \| ^{q}
			+ \frac{d}{dt} k (u_m   (t)) - \frac{1}{\tau _ m } \| f  (t) \| ^2  .
\end{align*}
Hence 
\begin{equation*}
\frac{\tau _m}{2}  \| u' _m  (t)\|  ^2 + \frac{d}{dt}  \varphi _{\lambda } (u_m  (t)) + \frac{a}{q}  \frac{d}{dt}  \| u _m   (t) \| ^{q}
			+ \frac{d}{dt} k (u_m   (t)) \leq  \frac{1}{\tau _ m } \| f  (t) \| ^2 , 
\end{equation*}
namely,  
\begin{equation}
\frac{\tau _m}{2} \int_{0}^{T}  \| u' _m  (t)\|  ^2 dt 
		\leq  \frac{1}{\tau _ m } \int_{0}^{T}  \| f  (t) \| ^2 dt   +C _4 , 
\label{est-closed-001}
\end{equation}
where and henceforth, $C_4 > 0 $ is a general constant independent of $ m $. 
Since we assume that 
$\tau _ m < \tau _0 $ and $\tau _m \to \tau _ 0 $, 
we obtain $\tau _ 0 > 0 $ and 
there exists some large $m_ 0 \in \N $ such that 
$ 0 < \tau _0 /2  < \tau _m $ for any $ m \geq m_0 $.  
Therefore, \eqref{est-closed-001} yields 
\begin{equation}
 \int_{0}^{T}  \| u' _m  (t)\|  ^2 dt 
		\leq  \frac{2}{\tau ^2 _ m } \int_{0}^{T}  \| f  (t) \| ^2 dt   +\frac{2}{\tau _ m }  C _4 \leq C_4 
\label{est-closed-002}		
\end{equation}
for every $m \geq m_0 $. 
This immediately implies 
\begin{equation}
\sup _{ 0 \leq t \leq T }  \| u _m  (t)\|  ^2 
		\leq   C_4 .
\label{est-closed-003}		
\end{equation}
From the second equation of \eqref{EqLam_m}, we can derive 
\begin{equation}
 \int_{0}^{T}  \| \mu  _m  (t)\|  ^2 dt 
		\leq  C_4 . 
\label{est-closed-004}		
\end{equation}
By \eqref{est-closed-002}--\eqref{est-closed-004}, there exist
some subsequences of $ \{ u _m  \} _ {m \in \N } ,  \{ \mu _m  \} _ {m \in \N } $ (we omit relabeling)
and $u , \mu  : [0,T ] \to \R ^N $ such that 
\begin{align*}
& u _m \to u  & \text{strongly in } C( [0,T] ; \R ^N ) ,\\
& u' _m \rightharpoonup  u'  & \text{weakly in } L ^{2}
			( 0,T; \R ^N ) , \\
& \mu _ m  \rightharpoonup  \mu  & \text{weakly in } L ^{2 }
			( 0,T ; \R ^N ) .
\end{align*}
We also have 
\begin{align*}
& \tau _ m u' _m \rightharpoonup \tau _0  u'  & \text{weakly in } L ^{2}
			( 0,T; \R ^N ) .
\end{align*}
Moreover,  by  \eqref{est-closed-003} and \eqref{bounded}, we have 
\begin{equation*}
\sup _{0 \leq t \leq T }   \|  \partial \varphi _{\lambda } (u _ m (t) )\| \leq C_4  
\end{equation*}
and by \eqref{est-closed-002} and the first equation of \eqref{EqLam_m}, we get 
\begin{equation*}
\int_{0}^{T}    \|  \partial \varphi _{\lambda } (\mu _ m (t) )\| ^2 dt  \leq C_4 . 
\end{equation*}
Hence there exist 
$\xi , \zeta : [0,T ] \to \R ^N $ such that 
\begin{align*}
& \partial \varphi _{\lambda } ( u  _m (t) ) \rightharpoonup \xi
		& \ast \text{-weakly in } L ^{\infty }
			( 0,T ; \R ^N ) , \\
& \partial \varphi _{\lambda _n } ( \mu  _m (t) ) \rightharpoonup  \zeta
		& \text{weakly in } L ^{2 }
			( 0,T ; \R ^N ) .
\end{align*}
By Lemma \ref{demiclosedness}, 
we get $\xi (t) = \partial \varphi _{\lambda  } (u (t)) $ for a.e. $t\in [0,T]$.
Then $ u, \mu $ and $\zeta $ satisfy 
\begin{equation*}
\begin{cases}
~~ u' (t) + \zeta (t) = 0 ,\\
~~\mu (t) = \tau _0 u' (t) +  \partial \varphi _{\lambda  } (u (t))  + a \| u(t) \| ^{q-2} u(t)
			+ \pi (u(t)) - f(t)  . 
\end{cases}
\end{equation*}
Multiplying the second equation of \eqref{EqLam_m} by $u ' _m $,
we get 
\begin{align*}
& \liminf _{m \to \infty } \int_{0}^{T} u' _m (t) \cdot \mu _m (t) dt \\   
 = & ~  \liminf _{m\to \infty } 
\LC  \tau _m \int_{0}^{T} \| u ' _m (t)\| ^2 dt + \varphi _{\lambda } (u _ m (T) ) -
\varphi _{\lambda } (u _0  )  \right .  \\
& \hspace{2cm} \left . 
+ 
\frac{a}{q} \LC \| u _m (T) \| ^q - \| u _0 \| ^q \RC + k (u _ m (T)) - k ( u_ 0 )
 - \int_{0}^{T} f (t ) \cdot u' _m (t)  dt   \RC \\
  \geq & ~  
\tau _0 \int_{0}^{T} \| u ' (t)\| ^2 dt + \varphi _{\lambda } (u  (T) ) -
\varphi _{\lambda } (u _0  )    \\
&\hspace{2cm} 
+ 
\frac{a}{q} \LC \| u  (T) \| ^q - \| u _0 \| ^q \RC + k (u  (T)) - k ( u_ 0 )
 - \int_{0}^{T} f (t ) \cdot u'  (t)  dt   \\
= & ~ 
 \int_{0}^{T} \LC \tau _0  u ' (t)+ \partial  \varphi _{\lambda } (u (t))  
			+ a\| u  (t) \| ^{q-2} u(t)  +  \pi (u  (t)) 
			 - f(t) \RC \cdot u' (t) dt 
    \\
 = & ~  
 \int_{0}^{T} \mu (t) \cdot u' (t) dt ,
\end{align*}
which implies 
\begin{equation*}
\limsup _{m \to \infty }  \int_{0}^{T}  \mu _m (t) \cdot (- u' _m (t) ) dt
\leq 
 \int_{0}^{T} \mu (t) \cdot ( - u' (t) )  dt
= 
 \int_{0}^{T} \mu (t) \cdot \zeta (t )  dt  . 
\end{equation*}
Hence $\zeta (t) = \partial  \varphi _{\lambda } (\mu (t))$ for a.e. $t \in [0,T]$
and we can assure that \eqref{EqLam_m} with $\tau  = \tau _ 0 $ has a solution. 
Therefore, $\tau _ 0 \in \mathcal{M}$ and  $\mathcal{M}  $ is a closed set.

Next we show that $\mathcal{M}$ is open. 
To this end, 
we check that for every $\tau _ 0 \in \mathcal{M}$ there exists some small $\delta > 0 $ such that 
$ [\tau _0 , \tau _0 + \delta ] \subset \mathcal{M}$.
Since $ [ 0 , \lambda /  4  ] \subset \mathcal{M }$, we can assume that 
$\tau _0 > 0 $ without loss of generality. 
For any given $h \in L ^2 (0,T ; \R ^N )$, let $(u_h , \mu _ h )$ be a unique solution to 
\begin{equation}
\begin{cases}
~~ u' _h   (t) + \partial \varphi _{\lambda } (\mu _h  (t) ) = 0 ,
			 \\
~~\mu  _h (t) = \tau  _0  u' _h  (t) + \partial \varphi _{\lambda } (u _h (t)) + a \| u   _h (t) \| ^{q-2} u_h   (t)
			+ \pi (u _h  (t)) - f  (t) + h (t ),  \\
~~u  _h (0) = u_0 .  
\end{cases}
\label{EqLam_open}
\tag{P$_  { h }$}
\end{equation}
In addition, we define a mapping $\mathscr{T} _{\delta } : L ^2 (0,T ; \R ^N ) \to L ^2 (0,T ; \R ^N ) $
by $\mathscr{T}_{\delta } (h) = \delta u' _h $, where $\delta >0 $ be fixed later. 
We here show that $\mathscr{T} _{\delta } $ is a contraction mapping on $ L ^2 (0,T ; \R ^N )$
for a sufficiently small $\delta $. 
Then we can see that there exists a solution 
to
\begin{equation*}
\begin{cases}
~~ u'     + \partial \varphi _{\lambda } (\mu    ) = 0 ,\\
~~\mu  = ( \tau _0 + \delta )   u'   + \partial \varphi _{\lambda } (u  ) + a \| u    \| ^{q-2} u  
			+ \pi (u ) - f  ,\\
~~u  (0) = u_0 ,  
\end{cases}
\end{equation*}
namely, we get $[\tau_ 0 , \tau _0 + \delta ] \subset \mathcal{M}$ and $\mathcal{M}$ is open. 
We define 
\begin{equation*}
K' _M := \LD 
h \in L ^2 (0,T ; \R ^N ) ; ~~\int_{0}^{T} \| h (t)\| ^2 dt \leq M  
\RD
\end{equation*}
for $M > 0 $. 
Multiplying the first equation of \eqref{EqLam_open} by $\mu  _ h $
and the second equation of  \eqref{EqLam_open} by $u ' _h $, 
we get 
\begin{align*}
& \tau _ 0 \| u' _h (t) \|  ^2 
+\frac{d}{dt}   \varphi _{\lambda } (u _h (t)) + \frac{a}{q} \frac{d}{dt}  \| u   _h (t) \| ^{q} 
			+\frac{d}{dt}  k (u _h  (t))  \\
\leq & ~ \| f(t)\| \| u' _h (t)\| +\| h (t)\| \| u' _h (t)\| \\
\leq & ~ \frac{1}{\tau _0 } \| f(t)\| ^2  +\frac{1}{\tau _0 } \| h (t)\| ^2 + \frac{\tau _0 }{2}   \| u' _h (t)\| ^2 
\end{align*}
Then by \eqref{est-of-k}, there exists some constant $ C_5 > 0  $ independent of $h $
such that 
\begin{align*}
&\int_{0}^{T}  \| u' _h (t) \|  ^2 dt  \\
& \leq 
\frac{2}{\tau _0 }
\LC  
\frac{1}{\tau  _0 }  \int_{0}^{T} \| f(t)\| ^2 dt  +\frac{1}{\tau  _0 }  \int_{0}^{T} \| h (t)\| ^2 dt 
+
 \varphi _{\lambda } (u _0 ) + \frac{a}{q}  \| u  _ 0  \| ^{q} 
			+   k (u _0) +C _5  \RC .
\end{align*}
Let $h \in K'_M $. From 
\begin{align*}
\int_{0}^{T}  \| \delta u' _h (t) \|  ^2 dt  
 \leq 
\frac{2 \delta ^2 }{\tau _0 }
\LC  
\frac{1}{\tau  _0 }  \int_{0}^{T} \| f(t)\| ^2 dt  +\frac{1}{\tau  _0 }  M
+
 \varphi _{\lambda } (u _0 ) + \frac{a}{q}  \| u  _ 0  \| ^{q} 
			+   k (u _0) +C _5  \RC , 
\end{align*}
we obtain $\delta u ' _ h \in K ' _M $ by letting 
$\delta > 0 $ be sufficiently small such that 
\begin{equation*}
\delta ^2 
			\leq  
			\frac{ \tau _0 M }{2}
\LC  
\frac{1}{\tau  _0 }  \int_{0}^{T} \| f(t)\| ^2 dt  +\frac{1}{\tau  _0 } M
+
 \varphi _{\lambda } (u _0 ) + \frac{a}{q}  \| u  _ 0  \| ^{q} 
			+   k (u _0) +C _5  \RC ^{-1} . 
\end{equation*}
Then for every $h \in K'_M $, we have 
\begin{align}
\sup _ {0\leq t \leq T} \| u _ h (t ) \| 
	\leq \int_{0}^{T} \|  u '_h  (t)\| dt + \| u_ 0 \|  
	\leq \frac{ T ^{1/2} M ^{1/2 } }{ \delta }  + \| u_ 0 \|  . 
\label{est-uh-open-001} 
\end{align}
Let $h _1 , h_2 \in K'_M$.
Multiplying the difference of 
the first equations of \eqref{EqLam_open} by $\mu  _ {h _1 } - \mu _{h _2 }  $
and the difference of second equations of  \eqref{EqLam_open} by $u ' _ { h_1 } -u ' _ { h_2 }  $, 
and repeating the same argument as above for the uniqueness of a solution to \eqref{EqLam_tau},
we obtain 
\begin{align*}
\tau _0  \| u' _{h_1}  (t)  - u' _{h_2}  (t)  \| ^2 
& \leq  \frac{1}{\lambda } \| u_{h_1}  (t) - u_{h_2} (t) \|  \| u' _{h_1}  (t)  - u' _{h_2} (t)  \| \\
					 & \hspace{1cm} + a \| \| u _{h_1} (t) \| ^{q-2} u _{h_1}  (t)  - \| u _{h_2} (t) \| ^{q-2} u _{h_2} (t)   \|  \| u' _{h_1}  (t)  - u' _{h_2} (t)  \| \\
					 & \hspace{1cm}  + L \| u_{h_1} (t) - u_{h_2} (t) \|  \| u' _{h_1}  (t)  - u' _{h_2} (t)  \| \\
					 & \hspace{1cm}  + \| h_1 (t) - h_2 (t) \|  \| u' _{h_1}  (t)  - u' _{h_2} (t)  \|  . 
\end{align*}
By the Taylor expansion and \eqref{est-uh-open-001}, 
there exists some constant $\ell (T, M , f, u _ 0  , \tau _ 0 ) $ depending only on 
$ T, M , f, u _ 0  , \tau _ 0  $ such that 
\begin{equation*}
\LN
\| u _{h_1 } (t) \| ^{q-2} u _{h_1}  (t)  - \| u _{h_2} (t) \| ^{q-2} u _{h_2} (t)   \RN
\leq 
\ell (T, M , f, u _ 0  , \tau _ 0 ) \| u _{h_1}  (t)  - u _{h_2} (t)  \| . 
\end{equation*}
Hence we obtain 
\begin{equation}
\begin{split}
& \frac{\tau _0 }{2}   \| u' _{h_1}  (t)  - u' _{h_2}  (t)  \| ^2 \\
					& \leq  \frac{2}{\lambda \tau _0  } \| u _{h_1}  (t)  - u _{h_2}  (t) \| ^2
							+ \frac{2 a^2 \ell (T, M , f, u _ 0  , \tau _ 0 ) ^2 }{\tau _0  }  \|  u _{h_1}  (t)  - u _{h_2}  (t) \| ^2  \\
					 & \hspace{2cm}  + \frac{2 L ^2 }{\tau _0 } \| u _{h_1}  (t)  - u _{h_2}  (t) \| ^2  + \frac{2}{\tau_0  }  \| h_1 (t) - h_2 (t) \| ^2 
\end{split}
\label{est-uh-open-002} 
\end{equation}
and then 
\begin{align*}
\frac{d}{dt} \| u _{h_1}  (t)  - u _{h_2}  (t)  \| ^2 &\leq 2 \| u _{h_1}  (t)  - u _{h_2}  (t) \|   \| u' _{h_1}  (t)  - u' _{h_2}  (t) \| \\
		& \leq 
		\frac{2}{\tau _0 } \LC 	\frac{1}{\lambda ^ {1/2 }  }  +   a \ell (T, M , f, u _ 0  , \tau _ 0  )  +  L + 1 \RC   \| u _{h_1}  (t)  - u _{h_2}  (t) \| ^2 \\
		& \hspace{1cm}  + \frac{1}{2\tau _0  }    \| h_1 (t) - h_2 (t) \| ^2  	.		 
\end{align*}
By the Gronwall's inequality, 
\begin{equation*}
\sup _{0\leq t \leq T }   \| u _{h_1}  (t)  - u _{h_2}  (t) \| ^2 
		\leq  \frac{e ^{C_6T} }{2\tau _0 }   \int_{0}^{T}  \| h_1 (t) - h_2 (t) \| ^2 dt 
\end{equation*}
holds, where 
\begin{equation*}
C_6 := \frac{2}{\tau _0 } \LC 	\frac{1}{\lambda ^ {1/2 }  }  +   a \ell (T, M , f, u _ 0  , \tau _ 0 )  +  L + 1 \RC . 
\end{equation*}
By using \eqref{est-uh-open-002}, we have 
\begin{align*}
  &  \int_{0}^{T} \| u' _{h_1}  (t)  - u' _{h_2}  (t)  \| ^2 dt \\
					 \leq  & ~ 
\frac{4}{\tau ^2  _ 0 } 
\LC 
\LC \frac{1}{\lambda   } +  a^2 \ell (T, M , f, u _ 0  , \tau _ 0 ) ^2 + L ^2 \RC \int_{0}^{T}   \| u _{h_1}  (t)  - u _{h_2}  (t) \| ^2 dt  \right . \\
& \hspace{3cm} 
\left . 
+ \int_{0}^{T}  \| h_1 (t) - h_2 (t) \| ^2 dt
\RC \\
 \leq  &~ 
\frac{4}{\tau ^2  _ 0 } 
\LC 
\LC \frac{1}{\lambda   } +  a^2 \ell (T, M , f, u _ 0  , \tau _ 0 ) ^2 + L ^2 \RC \frac{T e ^{C_6T} }{2\tau _0 }  +1  
\RC \int_{0}^{T}  \| h_1 (t) - h_2 (t) \| ^2 dt . 
\end{align*}
Therefore, if $\delta > 0 $ is sufficiently small so that 
\begin{equation*}
\delta ^2 < 
\frac{\tau ^2  _ 0 }{ 4} 
\LC 
\LC \frac{1}{\lambda   } +  a^2 \ell (T, M , f, u _ 0  , \tau _ 0 ) ^2 + L ^2 \RC \frac{T e ^{C_6T} }{2\tau _0 }  +1  
\RC ^{-1} , 
\end{equation*}
then $\mathscr{T} _{\delta } : L ^2 (0,T ; \R ^N ) \to L ^2 (0,T ; \R ^N ) $
is  a contraction mapping on $K ' _M$, whence follows Theorem \ref{Th3}.  
\end{proof}

\begin{proof}[Proof of Theorem \ref{Th2}]
To show the solvability of the original problem \eqref{Eq},  
we discuss the convergence of \eqref{EqLam_tau} as $\lambda \to 0 $. 
This argument is almost the same as that for the case of $\tau = 0 $ (recall Section 3).
Let $( u _{\lambda } , \mu _{\lambda })$ be a unique solution to \eqref{EqLam_tau}. 
Multiplying the first equation of \eqref{EqLam_tau} by $\mu _{\lambda }(t) $
and 
multiplying the second equation of \eqref{EqLam_tau} by $u ' _{\lambda } (t)$,
we have 
\begin{equation*}
\frac{\tau}{2}  \| u '  _{\lambda } (t)\| ^2 +
\frac{d}{dt} \varphi _\lambda  (u _{\lambda }(t))
+ \frac{a}{q} \frac{d}{dt} \| u _{\lambda }(t) \| ^q 
+  \frac{d}{dt} k (u _{\lambda } (t)) \leq  \frac{1}{2 \tau } \| f(t) \|  ^2  . 
\end{equation*}
Integrating this over $[0,T]$, we get 
\begin{align*}
&\frac{\tau}{2} \int_{0}^{T}  \| u '  _{\lambda } (t)\| ^2 dt 
+ \frac{a}{q} \| u _{\lambda }(T) \| ^q 
+  k (u _{\lambda } (T))  \\
&\hspace{2cm} \leq  \frac{1}{2 \tau } \int_{0}^{T}   \| f(t) \|  ^2 dt 
+
\varphi _{G,p  }  (u _0 )
+ \frac{a}{q} \| u _ 0  \| ^q 
+   k (u _0 )  . 
\end{align*}
From \eqref{est-of-k} and $q > 2 $, 
\begin{equation}
 \int_{0}^{T}  \| u '  _{\lambda } (t)\| ^2 dt \leq C_7 , 
\label{est-lam-to-0-001} 
\end{equation}
where $C_7 > 0 $ is a general constant independent of $\lambda $. 
This implies 
\begin{equation}
\sup _{0 \leq t \leq T } \| u   _{\lambda } (t)\|  \leq C_7 . 
\label{est-lam-to-0-002} 
\end{equation}
By the second equation of \eqref{EqLam_tau}, 
\begin{equation}
\int_{0}^{T}  \| \mu   _{\lambda } (t)\|  ^2 dt  \leq C_7 . 
\label{est-lam-to-0-003} 
\end{equation}
From the first equation of \eqref{EqLam_tau} and \eqref{bounded}, 
we also derive 
\begin{equation}
 \sup _{0\leq t \leq T  } \| \partial \varphi _{\lambda } ( u  _{\lambda }(t) ) \| 
+
\int_{0}^{T}  \| \partial \varphi _{\lambda } ( \mu  _{\lambda }(t) ) \| ^2 dt  
\leq 
C _7 . 
\label{est-lam-to-0-004}
\end{equation}

By \eqref{est-lam-to-0-001}--\eqref{est-lam-to-0-004},
there exist some subsequences $ \{ u _{\lambda _n  } \} _ {n \in \N } ,  \{ \mu _{\lambda _n  } \} _ {n \in \N } $
and some $u , \mu , \xi , \zeta : [0,T ] \to \R ^N $ such that 
\begin{align*}
& u _{\lambda _n } \to u  & \text{strongly in } C( [0,T] ; \R ^N ) ,\\
& u' _{\lambda _n } \rightharpoonup  u'  & \text{weakly in } L ^2
			( 0,T ; \R ^N ) , \\
& \mu _{\lambda _n } \rightharpoonup  \mu  & \text{weakly in } L ^2
			( 0,T ; \R ^N ) ,\\
& \partial \varphi _{\lambda _n } ( u  _{\lambda _n }(t) ) \rightharpoonup \xi
		& \ast \text{-weakly in } L ^{\infty }
			( 0,T ; \R ^N ) , \\
& \partial \varphi _{\lambda _n } ( \mu  _{\lambda _n }(t) ) \rightharpoonup  \zeta
		&  \text{weakly in } L ^2
			( 0,T ; \R ^N ) .
\end{align*}
By Lemma \ref{demiclosedness}, 
we have $\xi (t) \in \partial \varphi _{G,p } (u (t)) $ for a.e. $t\in [0,T]$.
Then $ u, \mu $ and $\zeta $ satisfy 
\begin{equation*}
\begin{cases}
~~ u' (t) + \zeta (t) = 0 ,\\
~~\mu (t) = \tau u' (t) + \xi (t) + a \| u(t) \| ^{q-2} u(t)
			+ \pi (u(t)) - f(t) .
\end{cases}
\end{equation*}
To show that $\zeta (t) \in \partial \varphi _{G,p } (\mu (t)) $ holds for a.e. $t\in [0,T]$, 
we multiply the second equation of \eqref{EqLam} with $\lambda = \lambda _ n $
 by $u ' _{\lambda _n }$ and take the limit $n \to \infty $.
 Then we have 
\begin{align*}
&\liminf _{n \to \infty } \int_{0}^{T} \mu _{\lambda _n } (t) \cdot u' _{\lambda _n} (t) dt\\
=&  ~ \liminf _{n \to \infty }
\int_{0}^{T} 
\LC 
\frac{d}{dt }\varphi _{\lambda _n } (u _{\lambda _n } (t)  )
+ \frac{a}{q}\frac{d}{dt } \| u _{\lambda _n } (t) \| ^q 
+ \frac{d}{dt } k (u _{\lambda _n} (t))
-u ' _{\lambda _n } (t) \cdot f(t)   
\RC dt \\
=& ~  \liminf _{n \to \infty }
\LC 
\varphi _{\lambda _n } (u _{\lambda _n } (T)  )
-\varphi _{\lambda _n } (u _0 )
+ \frac{a}{q}  \| u _{\lambda _n } (T) \| ^q 
- \frac{a}{q}  \| u _0 \| ^q \right . \\
&\left . \hspace{5cm} 
+  k (u _{\lambda _n} (T))
-  k (u _0)
-\int_{0}^{T} 
 u ' _{\lambda _n } (t) \cdot f(t)  dt  
\RC  \\
\geq 
& ~
 \liminf _{n \to \infty }\varphi _{\lambda _n } (u _{\lambda _n } (T)  )
-\varphi _{G,p } (u _0 )
+ \frac{a}{q}  \| u  (T) \| ^q 
- \frac{a}{q}  \| u _0 \| ^q  \\
&\hspace{5cm}
+  k (u  (T))
-  k (u _0)
-\int_{0}^{T} 
 u ' (t) \cdot f(t)  dt  . 
\end{align*}
Let  $J _{\lambda }$ be the resolvent of $\partial \varphi _{G,p }$. 
Then by 
\eqref{est-lam-to-0-004}, we have 
\begin{equation*}
\sup _{0 \leq t \leq T} \| J _{\lambda _n } u _{\lambda _n } (t) - u _{\lambda _n } (t) \|
\leq \lambda _n C _7 . 
\end{equation*}
Hence 
\begin{align*}
& J _{\lambda _n }u _{\lambda _n } \to u  & \text{strongly in } C( [0,T] ; \R ^N ) .
\end{align*}
By the continuity of $\varphi _{G,p }$
and 
$\varphi _{G,p } (J _{\lambda _n }x) \leq  \varphi _{\lambda _n } (x) $, we get 
\begin{align*}
 \liminf _{n \to \infty }\varphi _{\lambda _n } (u _{\lambda _n } (T)  )
& \geq 
 \liminf _{n \to \infty }\varphi _{G, p } ( J _{\lambda _n }u _{\lambda _n } (T)  ) \\
& \geq 
 \varphi _{G, p } ( u (T)  ) .
\end{align*}
Therefore 
\begin{align*}
&\liminf _{n \to \infty } \int_{0}^{T} \mu _{\lambda _n } (t) \cdot u' _{\lambda _n} (t) dt\\
\geq 
& ~
 \varphi _{G, p } ( u (T)  ) 
-\varphi _{G,p } (u _0 )
+ \frac{a}{q}  \| u  (T) \| ^q 
- \frac{a}{q}  \| u _0 \| ^q 
+  k (u  (T))
-  k (u _0)
-\int_{0}^{T} 
 u ' (t) \cdot f(t)  dt  \\
=
&
\int_{0}^{T}  
\LC
\frac{d}{dt}  \varphi _{G, p } ( u (t)  ) 
+ \frac{a}{q} \frac{d}{dt}   \| u  (t) \| ^q 
+  \frac{d}{dt}  k (u  (t))
-
 u ' (t) \cdot f(t) \RC dt  \\
=
&
\int_{0}^{T}  
u ' (t) 
\cdot 
\LC
\xi (t) 
+a \| u(t) \| ^{q-2} u(t) 
+\pi (u (t))
- f(t) \RC dt  
=
\int_{0}^{T}  
u ' (t) 
\cdot 
\mu (t) dt ,
\end{align*}
which yields 
\begin{align*}
&\limsup _{n \to \infty } \int_{0}^{T} J_{\lambda _n }\mu _{\lambda _n } (t) \cdot u '_{\lambda _n} (t) dt
\geq 
\liminf _{n \to \infty } \int_{0}^{T} J_{\lambda _n }\mu _{\lambda _n } (t) \cdot u' _{\lambda _n} (t) dt
\\
 = & ~
\liminf _{n \to \infty } 
\LC
\int_{0}^{T} \mu _{\lambda _n } (t) \cdot u' _{\lambda _n} (t) dt
+
\int_{0}^{T} ( J_{\lambda _n }\mu _{\lambda _n } (t)
- \mu _{\lambda _n } (t) )  \cdot u' _{\lambda _n} (t) dt
\RC
 \\
\geq & ~   
\liminf _{n \to \infty } 
\LC
\int_{0}^{T} \mu _{\lambda _n } (t) \cdot u ' _{\lambda _n} (t) dt
- \lambda _n 
\int_{0}^{T} \| \partial \varphi _{\lambda _n } (\mu _{\lambda _n })\|
 \|  u' _{\lambda _n} (t) \|  dt
\RC
 \\
= & ~
\int_{0}^{T}  
u ' (t) 
\cdot 
\mu (t) dt . 
\end{align*}
On the other hand, by \eqref{est-lam-to-0-004}, 
\begin{equation*}
\int_{0}^{T} \| J_{\lambda _n }\mu _{\lambda _n } (t) -
\mu _{\lambda _n } (t) \| ^2 dt 
= 
\lambda ^2_n \int_{0}^{T}  \| \partial \varphi _{\lambda _n } ( \mu _{\lambda _n } (t) )
 \| ^2 dt \to 0 
\end{equation*}
as $n \to \infty $. Hence 
\begin{align*}
& J _{\lambda _n } \mu  _{\lambda _n } \rightharpoonup  \mu   & \text{weakly in } L ^2 ( 0,T ; \R ^N ) .
\end{align*}
From the first equation of \eqref{EqLam}, we can derive 
\begin{align*}
\limsup _{n \to \infty } \int_{0}^{T} J_{\lambda _n }\mu _{\lambda _n } (t) \cdot 
\partial \varphi _{\lambda _n } ( \mu _{\lambda _n} (t) ) dt
& =
\limsup _{n \to \infty } \int_{0}^{T} J_{\lambda _n }\mu _{\lambda _n } (t) \cdot 
( - u' _{\lambda _n} (t) ) dt \\
& =
- \liminf _{n \to \infty } \int_{0}^{T} J_{\lambda _n }\mu _{\lambda _n } (t) \cdot 
 u' _{\lambda _n} (t)  dt \\
& \leq 
- \int_{0}^{T} \mu  (t) \cdot 
 u'  (t)  dt \\
& \leq 
 \int_{0}^{T} \mu  (t) \cdot 
 \zeta  (t)  dt . 
\end{align*}
Since 
$\partial \varphi _{\lambda _n } ( \mu _{\lambda _n} (t) ) 
\in \partial \varphi _{G,p  } (J _{\lambda _n } \mu _{\lambda _n} (t) )$
for a.e. $t \in [0 , T]$, 
Lemma \ref{wwlimsup} leads to 
\begin{equation*}
\zeta (t) \in \partial \varphi _{G,p } (\mu (t)) ~~~~~\text{a.e. } t\in [0,T] , 
\end{equation*}
whence follows Theorem \ref{Th2}. 
\end{proof}

\section{Convergence as $\tau \to 0 $}

In this section, we discuss the convergence of solutions of  
\eqref{Eq} with $\tau > 0 $ tending to that with $\tau  \to 0 $. 
Henceforth, we assume $f \equiv 0 $. 
\begin{Th}
\label{Th4}
Assume that  $f \equiv 0 $. 
Let a solution to \eqref{Eq} with $\tau \in ( 0,1 ) $ be $ ( u _ \tau , \mu _ \tau  ) $. 
Then there exist some subsequences $\{ \tau _n \} _{n\in \N }$, $ \{ u _{\tau _n }\} _{n\in \N }$, $ \{ \mu _{\tau _n }\} _{n\in \N } $
and $u , \mu : [0,T] \to \R ^N $ such that
 $\tau _n \to 0 $ as $n \to \infty $ and
 for any fixed $T> 0 $
\begin{align*}
& u _{\tau _n } \to u  & \text{strongly in } C( [0,T] ; \R ^N ) ,\\
& u' _{\tau _n } \rightharpoonup  u'  & \ast \text{-weakly in } L ^{\infty }
			( 0,T ; \R ^N ) ,\\
& \mu  _{\tau _n } \rightharpoonup  \mu   & \ast \text{-weakly in } L ^{\infty }
			( 0,T ; \R ^N ) ,
\end{align*}
if $p =1 $ and 
\begin{align*}
& u _{\tau _n } \to u  & \text{strongly in } C( [0,T] ; \R ^N ) ,\\
& u' _{\tau _n }\rightharpoonup  u'  & \text{weakly in } L ^{p' }
			( 0,T ; \R ^N ) ,\\
& \mu  _{\tau _n } \rightharpoonup  \mu   & \text{weakly in } L ^{ \bar{p}  }
			( 0,T ; \R ^N ) , 
\end{align*}
if $p > 1  $, where 
$ p' : = p / ( p-1 )$ is the H\"{o}lder conjugate of $p  > 1 $ and  
$\bar{p} := \max \{ p , p' \} $. 
Furthermore, the pair of limits $ ( u , \mu ) $ is a solution to \eqref{Eq} with $\tau = 0 $. 
\end{Th}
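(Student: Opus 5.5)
The plan is to derive estimates on $(u_\tau,\mu_\tau)$ that are uniform in $\tau\in(0,1)$, extract convergent subsequences, and then identify the limit of the multivalued term $\mathcal{L}_{G,p}(\mu_\tau)$ by a $\limsup$ argument of the type in Lemma \ref{wwlimsup}. Write $u_\tau'=-\zeta_\tau$ with $\zeta_\tau(t)\in\mathcal{L}_{G,p}(\mu_\tau(t))$. Write also $\mu_\tau=\tau u_\tau'+\xi_\tau+a\|u_\tau\|^{q-2}u_\tau+\pi(u_\tau)$ with $\xi_\tau(t)\in\mathcal{L}_{G,p}(u_\tau(t))$. Set $E(u):=\varphi_{G,p}(u)+\frac{a}{q}\|u\|^q+k(u)$.

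\textbf{Step 1: energy identity.} Pair the first equation with $\mu_\tau$ and the second with $u_\tau'$, using \eqref{timesx} and the chain rule \eqref{Chain}. This is legitimate since $u_\tau\in W^{1,2}$ by Theorem \ref{Th2}. It gives
\begin{equation*}
\tau\|u_\tau'(t)\|^2+\frac{d}{dt}E(u_\tau(t))+p\,\varphi_{G,p}(\mu_\tau(t))=0 .
\end{equation*}
Integrating over $[0,T]$ and using \eqref{est-of-k} with $q>2$, exactly as in Section 4, yields three bounds. First, $\sup_t\|u_\tau(t)\|\le C$. Second, $\tau\int_0^T\|u_\tau'\|^2\le C$. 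Third, $\int_0^T\varphi_{G,p}(\mu_\tau)\,dt\le C$. Consequently $\xi_\tau$ is bounded in $L^\infty$ by \eqref{bounded}.

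\textbf{Step 2: bound on $\mu_\tau$.} By \eqref{Poincare}, $\mu_\tau-\bar\mu_\tau$ is bounded in $L^p(0,T;\R^N)$. To control the mean, take the inner product of the second equation with $\bm 1$. By \eqref{times1}, $u_\tau'\cdot\bm1=-\zeta_\tau\cdot\bm1=0$ and $\xi_\tau\cdot\bm1=0$. Hence $|\mu_\tau\cdot\bm1|\le C(\|u_\tau\|^{q-1}+\|u_\tau\|+1)\le C$. So $\mu_\tau$ is bounded in $L^p$.

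Now split on $p$. If $p>1$, \eqref{bounded} gives $\|u_\tau'\|=\|\zeta_\tau\|\le\kappa\|\mu_\tau\|^{p-1}$, so $u_\tau'$ and $\zeta_\tau$ are bounded in $L^{p'}$. Then $\mu_\tau=\tau u_\tau'+(\text{bounded in }L^\infty)$ with $\tau<1$ shows $\mu_\tau$ is bounded in $L^{p'}$ as well, hence in $L^{\bar p}$. If $p=1$, then $\|\zeta_\tau\|\le\kappa$, so $u_\tau'$ is bounded in $L^\infty$, and hence so is $\mu_\tau$. Ascoli–Arzelà and weak(-$\ast$) compactness give the stated convergences, together with $\xi_{\tau_n}\rightharpoonup\xi$ and $\zeta_{\tau_n}\rightharpoonup\zeta=-u'$. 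Since $\tau_n u_{\tau_n}'\rightharpoonup0$, the limit satisfies $\mu=\xi+a\|u\|^{q-2}u+\pi(u)$. Lemma \ref{demiclosedness} (strong convergence of $u_{\tau_n}$) gives $\xi\in\mathcal{L}_{G,p}(u)$.

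\textbf{Step 3: identifying $\zeta\in\mathcal{L}_{G,p}(\mu)$.} From the energy identity, lower semicontinuity of $E$ (it is continuous), and $\tau\|u'_\tau\|^2\ge0$, we get
\begin{equation*}
\liminf_n\int_0^T\mu_{\tau_n}\cdot u_{\tau_n}'\,dt\ge E(u(T))-E(u_0)=\int_0^T\mu\cdot u'\,dt .
\end{equation*}
The last equality uses the chain rule for $\varphi_{G,p}$ along $u$ with $\xi\in L^\infty$ and $u'\in L^{p'}$, which I would justify as in \eqref{Chain}. Rewriting,
\begin{equation*}
\limsup_n\int_0^T\mu_{\tau_n}\cdot\zeta_{\tau_n}\,dt\le\int_0^T\mu\cdot\zeta\,dt .
\end{equation*}

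\textbf{Main obstacle.} Lemma \ref{wwlimsup} is stated only in the $L^2$–$L^2$ setting. Here the natural pairing is $L^{\bar p}$ against $L^{p'}$, or $L^\infty$ against $L^\infty$ when $p=1$. For $p\le2$ we have $\bar p=p'\ge2$, so both sequences lie in $L^2$ and the lemma applies directly. For $p>2$, $\zeta_\tau$ is only in $L^{p'}$ with $p'<2$. I would rerun the standard Minty argument behind Lemma \ref{wwlimsup} in the $L^p$–$L^{p'}$ duality: for $v\in L^\infty$ and $\eta\in\mathcal{L}_{G,p}(v)$, pass to the limit in $\int(\mu_{\tau_n}-v)\cdot(\zeta_{\tau_n}-\eta)\ge0$ and use maximal monotonicity of the pointwise operator. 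This concludes that $(u,\mu)$ solves \eqref{Eq} with $\tau=0$.
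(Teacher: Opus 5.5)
Your proposal is correct and follows the paper's proof essentially step for step. It uses the same energy identity from \eqref{timesx}, the same control of the mean via $\bm{1}$ and \eqref{Poincare}, the same split into $p=1$ and $p>1$, and the same $\limsup$ inequality to identify $\zeta(t)\in\mathcal{L}_{G,p}(\mu(t))$.

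For $p>2$ you are more careful than the paper: you justify the chain rule along $u\in W^{1,p'}$, and you replace Lemma \ref{wwlimsup} (stated only in $L^2$) by a Minty argument in the $L^{p}$--$L^{p'}$ duality, both points the paper passes over silently. To finish that argument, take your test pairs with $v\in L^{p}$ rather than $v\in L^\infty$, for instance $v=(F+\mathcal{L}_{G,p})^{-1}(F(\mu)+\zeta)$ pointwise with $F(x)=\|x\|^{p-2}x$. Such a $v$ lies in $L^p$ but in general not in $L^\infty$, and it is what lets you pass from the integrated inequality to the pointwise inclusion.
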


\begin{proof}
Multiplying  the first equation of \eqref{Eq} by $ \mu  _{\tau }$
and 
multiplying  the second  equation of \eqref{Eq} by $ u '  _{\tau }$,
we get from \eqref{timesx}
\begin{equation}
p \varphi _{G,p } (\mu _ {\tau }  (t) )  + \tau \| u '  _ {\tau } (t)\| ^2 
+ \frac{d}{dt}  \varphi _{G,p } (u  _ {\tau }  (t) )
+ \frac{a}{q} \frac{d}{dt}  \| u  _ {\tau }  (t) \| ^q 
+  \frac{d}{dt}  k (  u  _ {\tau }  (t) ) \leq  0 ,
\label{est-tau-to-000} 
\end{equation}
and then 
\begin{equation*}
 \frac{a}{q}   \| u  _ {\tau }  (t) \| ^q 
+    k (  u  _ {\tau }  (t) ) \leq 
  \varphi _{G,p } (u  _ 0 )
+ \frac{a}{q}   \| u  _ 0  \| ^q 
+    k (  u  _0  ) . 
\end{equation*}
By \eqref{est-of-k} and $q >2 $, we have 
\begin{equation}
\sup _{ 0\leq t \leq T }   \| u  _ {\tau }  (t) \|  \leq C_8 , 
\label{est-tau-to-001} 
\end{equation}
where $c_8 , C_8 >0  $ are general constants independent of $\tau $. 
From \eqref{est-tau-to-000}, we also derive 
\begin{equation}
\tau  \int_{0}^{T}  \| u '  _ {\tau } (t)\| ^2 dt \leq C _ 8 
\label{est-tau-to-002} 
\end{equation}
and 
\begin{equation}
\int_{0}^{T}  \varphi _{G,p } (\mu _ {\tau }  (t) ) dt \leq C_ 8 . 
\label{est-tau-to-003} 
\end{equation}

Here let  $ \bm{1}  = (1, 1, \ldots, 1 )$ and 
\begin{equation*}
\bar{u} _{\tau}  (t) := \LC \frac{1}{N}  \sum_{i=1}^{N } u_{\tau i} (t) \RC \bm{1} . 
\end{equation*}
From \eqref{times1},  
multiplying the first equation of  \eqref{Eq} by $\bm{1}$, 
we get 
\begin{equation*}
\bar{u} '_{\tau } (t) = 0 .  
\end{equation*}
Multiplying the second equation of  \eqref{Eq} by $\bm{1}$, 
we have 
\begin{equation*}
\bar{\mu } _{\tau } (t) =  a \| u _{\tau } (t) \| ^{q-2} \bar{u} _{\tau } (t)
			+ \overline{\pi (u _{\tau }(t)) }  .  
\end{equation*}
Hence by \eqref{est-tau-to-001}, 
\begin{equation}
\sup _{0\leq t\leq T } \| \bar{\mu } _{\tau } (t) \|  \leq  C_8 .  
\label{est-tau-to-004} 
\end{equation}
By \eqref{Poincare}, we have 
\begin{align*}
\varphi _{G,p  } (\mu _{\tau } (t) ) 
& \geq c_8 \|  \mu _\tau (t) - \bar{ \mu} _\tau (t) \| ^ p  \\
& \geq c_8 \|  \mu _\tau (t) \| ^p - C _8 \| \bar{ \mu} _\tau (t) \| ^ p   . 
\end{align*}
From \eqref{est-tau-to-000}, 
\begin{align*}
c_8 \|  \mu _\tau (t) \| ^p
+ \frac{d}{dt}  \varphi _{G,p } (u  _ {\tau }  (t) )
+ \frac{a}{q} \frac{d}{dt}  \| u  _ {\tau }  (t) \| ^q 
+  \frac{d}{dt}  k (  u  _ {\tau }  (t) ) \leq 
C _8 \| \bar{ \mu} _\tau (t) \| ^ p  , 
\end{align*}
which implies 
\begin{equation}
\int_{0}^{T}  \|  \mu _\tau (t) \| ^p dt \leq C_8 . 
\label{est-tau-to-005} 
\end{equation}

Let $\xi  _ {\tau }$  and $ \zeta _{\tau }$ be the sections of $\partial  \varphi _{G,p } (u _{\tau } )$
and
$\partial  \varphi _{G,p } (\mu _{\tau } )$ satisfying the 1st equation of \eqref{Eq}. 
Namely, 
$\xi  _ {\tau }$  and $ \zeta _{\tau }$  fulfill 
\begin{align*}
& \xi _{\tau }  (t) \in \partial  \varphi _{G,p } ( u _{\tau } (t) ) , ~~~~
\zeta _{\tau } (t) \in \partial  \varphi _{G,p } (\mu _{\tau } (t)) , \\
&  u' _{\tau } (t) + \zeta _{\tau }(t) =  0 , ~~~~~
 \mu_{\tau }  (t) = \tau u' _{\tau }(t) + \xi _{\tau } (t) + a \| u _{\tau } (t) \| ^{q-2} u_{\tau }(t)
			+ \pi (u_{\tau } (t)) 
\end{align*}
for a.e. $t \in [0 ,T ]$. 
From \eqref{bounded}, 
we have 
\begin{equation*}
\| u ' _{\tau }(t) \| = \| \zeta  _{\tau } (t) \| \leq C_8 \| \mu _{\tau } (t) \| ^{p-1} .  
\end{equation*}
Hence when $p =1$, it follows that 
\begin{equation}
\sup _{0 \leq t \leq T } \| u ' _{\tau }(t) \| \leq C_8  
\label{est-tau-to-006} 
\end{equation}
and 
from the second equation of \eqref{Eq} and \eqref{est-tau-to-001}, 
\begin{equation}
\sup _{0 \leq t \leq T } \| \mu  _{\tau }(t) \| \leq C_8 .
\label{est-tau-to-006-1} 
\end{equation}
 When $p >1 $, 
\begin{equation}
\int_{0}^{T} \| u  '  _{\tau }(t) \| ^{ p' } dt \leq  C_8 \int_{0}^{T}  \| \mu _{\tau } (t) \| ^{p} dt \leq C _8
\label{est-tau-to-007} 
\end{equation}
by \eqref{est-tau-to-002}, where $ p' : = p / ( p-1 )$ is the H\"{o}lder conjugate of $p > 1 $. 
From the second equation of \eqref{Eq}, we also get 
\begin{equation}
  \int_{0}^{T} \| \mu   _{\tau }(t) \| ^{ p' } dt \leq  C _8 .
\label{est-tau-to-007-1} 
\end{equation}

According to \eqref{est-tau-to-001} and \eqref{est-tau-to-006}, 
there exist some subsequence $\{ \tau  _n \} _ {n \in \N }$ of $\{ \tau  \} _{\tau > 0 }$
and $u : [0,T] \to \R ^N $ such that $\tau _n \to 0 $ as $n \to \infty $ and
\begin{align*}
& u _{\tau _n } \to u  & \text{strongly in } C( [0,T] ; \R ^N ) ,\\
& u' _{\tau _n } \rightharpoonup  u'  & \ast \text{-weakly in } L ^{\infty }
			( 0,T ; \R ^N ) ,
\end{align*}
as $\tau _n \to 0 $  when $ p =1 $. 
On the other hand, from 
\eqref{est-tau-to-001} and \eqref{est-tau-to-007}, 
we can derive 
\begin{align*}
& u _{\tau _n }  \to u  & \text{strongly in } C( [0,T] ; \R ^N ) ,\\
& u' _{\tau _n } \rightharpoonup  u'  & \text{weakly in } L ^{p' }
			( 0,T ; \R ^N ) ,
\end{align*}
as $\tau _n \to 0 $  when $p > 1 $.  
Moreover, by using \eqref{est-tau-to-005}, \eqref{est-tau-to-006-1}, and \eqref{est-tau-to-007-1}, 
there exists some $\mu : [0 , T ] \to \R ^N $ such that
\begin{align*}
& \mu  _{\tau _n } \rightharpoonup  \mu   & \ast \text{-weakly in } L ^{\infty }
			( 0,T ; \R ^N ) 
\end{align*}
if $p=1 $ and 
\begin{align*}
& \mu  _{\tau _n } \rightharpoonup  \mu   & \text{weakly in } L ^{ \bar{p}  }
			( 0,T ; \R ^N ) 
\end{align*}
if $p> 1$, where $\bar{p} := \max \{ p , p' \} $.

Let $ \xi _{\tau _n }  $ and $ \zeta _{\tau _n } $ 
be sections of $ \partial \varphi _{G,p } (u _{\tau _n } )$ and  $ \partial \varphi _{G,p } (\mu  _{\tau _n } )$
satisfying the system \eqref{Eq} with $\tau = \tau _n $, respectively.
Namely, $ \xi _{\tau _n } ,  \zeta _{\tau _n } : [0,T ] \to \R ^N $
satisfy 
$ \xi _{\tau _n } (t) \in \partial \varphi _{G,p } (u _{\tau _n } (t) )$,  
$\zeta _{\tau _n} (t) \in  \partial \varphi _{G,p } (\mu  _{\tau _n }  (t ))$, and 
\begin{equation*}
\begin{cases}
~~ u' _{\tau _n } (t) + \zeta   _{\tau _n } (t)  = 0  , \\
~~\mu  _{\tau _n }(t) = \tau _n u' _{\tau _n }(t) + \xi  _{\tau _n } (t) + a \| u _{\tau _n } (t) \| ^{q-2} u_{\tau _n }(t)
			+ \pi (u _{\tau _n } (t)) ,
\end{cases}
\end{equation*}
for a.e. $t \in [0,T ]$. 
By \eqref{bounded} and \eqref{est-tau-to-001}, we obtain 
\begin{equation*}
\sup _{0\leq t \leq T } \| \xi _{\tau _n} (t) \| \leq C_8  
\end{equation*}
and there exists some $\xi :[0,T] \to \R ^N $ such that 
\begin{align*}
& \xi   _{\tau _n } \rightharpoonup  \xi  & \ast \text{-weakly in } L ^{\infty }
			( 0,T ; \R ^N ) 
\end{align*}
(we omit relabeling). 
From Lemma \ref{demiclosedness}, we derive 
$ \xi (t) \in \partial \varphi _{G,p } (u (t))$ for a.e. $t\in [0,T]$. 
By \eqref{bounded} and \eqref{est-tau-to-006-1}, 
\begin{equation*}
\sup _{0\leq t\leq T } \| \zeta _{\tau _n } (t) \| \leq C_ 8 
\end{equation*}
and 
\begin{align*}
& \zeta    _{\tau _n } \rightharpoonup  \zeta   & \ast \text{-weakly in } L ^{\infty }
			( 0,T ; \R ^N ) 
\end{align*}
when $p=1$. 
On the other hand, when $p>1$, 
\begin{equation*}
\int_{0}^{T} \|  \zeta _{\tau _n} (t) \| ^{p'} dt 
\leq 
C_8  \int_{0}^{T} \|  \mu _{\tau _n} (t) \| ^{p} dt 
\leq C_8 
\end{equation*}
holds by \eqref{est-tau-to-005}, which implies
\begin{align*}
& \zeta    _{\tau _n } \rightharpoonup  \zeta   &  \text{weakly in } L ^{p' }
			( 0,T ; \R ^N ) .
\end{align*}
By \eqref{est-tau-to-002}, we get 
\begin{align*}
& \tau _{n } u '    _{\tau _n } \to 0   &  \text{strongly in } L ^{2 }
			( 0,T ; \R ^N ) 
\end{align*}
as $\tau _n \to 0 $.
Hence the limits $ u , \mu , \xi , \zeta $ satisfy 
\begin{equation*}
\begin{cases}
~~ u'  (t) + \zeta   (t)  = 0  , \\
~~\mu  (t) =  \xi   (t) + a \| u  (t) \| ^{q-2} u  (t)
			+ \pi (u (t)) .
\end{cases}
\end{equation*}

Finally,  we have to prove that $\zeta (t) \in \partial \varphi _{G,p } (\mu (t)) $ holds for a.e. $t\in [0,T]$.
Multiplying the second equation of \eqref{EqLam} with $\tau  = \tau _ n $, 
 we have by $\tau _n \| u' _{\tau _n} (t) \| ^2 \geq 0  $
\begin{align*}
&\liminf _{n \to \infty } \int_{0}^{T} \mu _{\tau _n } (t) \cdot u' _{\tau _n} (t) dt \\
= & ~   \liminf _{n \to \infty }
\int_{0}^{T} 
\LC 
\tau _n \| u' _{\tau _n} (t) \| ^2 
+
\frac{d}{dt }\varphi _{G,p  } (u _{\tau  _n } (t)  )
+ \frac{1}{q}\frac{d}{dt } \| u _{\tau  _n } (t) \| ^q 
+ \frac{d}{dt } k (u _{\tau  _n} (t)) 
\RC dt \\
 \geq &~ 
\varphi _{ G,p  } (u  (T)  )
-\varphi _{G,p } (u _0 )
+ \frac{a}{q}  \| u  (T) \| ^q 
- \frac{a}{q}  \| u _0 \| ^q 
+  k (u  (T))
-  k (u _0) \\
 =&  ~ 
\int_{0}^{T}  
\LC 
\xi (t) 
+ 
a \|  u(t) \| ^{q-2} u (t) + \pi (u (t ) )   
\RC
\cdot u' (t) dt \\
 = &~ 
\int_{0}^{T}  
\mu (t) \cdot u' (t) dt . 
\end{align*}
Therefore, 
\begin{equation*}
\limsup _{n\to \infty } \int_{0}^{T} \mu _{\tau _n } (t) \cdot \zeta _{\tau _n} (t) dt
\leq 
\limsup _{n\to \infty } \int_{0}^{T} \mu  (t) \cdot \zeta (t) dt
\end{equation*}
holds, whence follows that $ \zeta (t) \in \partial \varphi _{G,p } ( \mu (t) )$ for a.e. $t\in [0,T]$.
\end{proof}


\pagebreak 

%
\address{
Takeshi Fukao\\
Faculty of Advanced Science and \\
Technology, Ryukoku University, \\
1-5 Yokotani, Seta Oecho, Otsu, \\
520-2194, JAPAN.%
}
{fukao@math.ryukoku.ac.jp}
%
\address{
Masahiro Ikeda\\
Graduate School of Information \\
Science and Technology, \\
The University of Osaka, \\
1-5 Yamadaoka Suita-shi, Osaka \\
565-0871, JAPAN/\\
Center for Advanced Intelligence \\
Project, RIKEN,
1-4-1 Nihonbashi, \\
Chuo-ku, 103-0027, JAPAN.
}
{ikeda@ist.osaka-u.ac.jp/\\
masahiro.ikeda@a.riken.jp}
%
%
\address{
Shun Uchida\\
Department of Integrated Science and Technology, \\
Faculty of Science and Technology, \\ 
Oita University,\\
700 Dannoharu, Oita City, Oita Pref., \\
 870-1192, JAPAN.
}
{shunuchida@oita-u.ac.jp}
\end{document}